\RequirePackage{fix-cm}
\documentclass[smallextended]{svjour3}
\smartqed  
\usepackage{graphicx}
\usepackage{stmaryrd,amsmath,mathrsfs}
\usepackage{tabularx, subfigure, epsfig, graphicx, cases}
\usepackage{subcaption}
\usepackage{color, CJK, anysize, enumerate}
\usepackage[colorlinks,hyperindex,CJKbookmarks,linkcolor=blue,anchorcolor=blue,citecolor=cyan]{hyperref}
\usepackage{booktabs, multirow}
\usepackage{float}
\usepackage{cite}
\usepackage{stmaryrd}
\usepackage{epstopdf}
\SetSymbolFont{stmry}{bold}{U}{stmry}{m}{n}
\usepackage{mathrsfs} %
\usepackage{amsmath}  %
\usepackage{amssymb}  %

\numberwithin{equation}{section}

\journalname{Journal of Scientific Computing}
\begin{document}
	\title{Second-Order Area/Volume-Preserving PFEMs for Surface Diffusion via Simpson--Boole Geometric Identities}
	\author{ Zhiqing Pan  \and 
		Jiwei Jia \and Lian Zhang
	}
	\institute{
		Zhiqing Pan \at
		Department of Computational Mathematics, School of Mathematics, Jilin University, Changchun, China\\
		\email{panzq24@mails.jlu.edu.cn}
		\and
		Jiwei Jia* (Corresponding author) \at
		Department of Computational Mathematics, School of Mathematics, Jilin University, Changchun, China  \\
		AI for Science and Engineering Center, Shenzhen Loop Area Institute, Shenzhen, China \\
		\email{jiajiwei@jlu.edu.cn}
		\and
		Lian Zhang \at 
		Shenzhen Research Institute of Big Data, Shenzhen, China \\ 
		AI for Science and Engineering Center, Shenzhen Loop Area Institute, Shenzhen, China  \\
		\email{zhanglian@sribd.cn}
	}
	\date{Received: date / Accepted: date}
	\maketitle

\begin{abstract}
	We propose second-order-in-time parametric finite element methods for surface diffusion of closed curves in two dimensions and closed surfaces in three dimensions. The construction is based on exact geometric variation identities along a quadratic temporal interpolation path. The induced area variation in 2D is evaluated exactly by Simpson's rule, while the induced volume variation in 3D is evaluated exactly by Boole's rule.
	
	The resulting fully discrete schemes preserve the enclosed area or volume exactly, without introducing an auxiliary Lagrange multiplier for the geometric constraint. They can be assembled on BGN-predicted auxiliary geometries and are therefore compatible with existing second-order BGN-type implementations. Numerical experiments demonstrate the expected second-order behavior, area/volume conservation, and good mesh quality for both curve and surface evolutions.
	
	\keywords{Surface diffusion \and Parametric finite element method \and Area/volume conservation \and Second-order time discretization}
	\subclass{65M60 \and 65M12 \and 65M15 \and 53E10 \and 35K55}
\end{abstract}
	
	\section{Introduction}
	
	Surface diffusion is a fourth-order geometric evolution in which the normal velocity is given by the Laplace--Beltrami operator of curvature. It arises in a variety of interfacial relaxation phenomena, including thermal grooving, crystal surface relaxation, and solid-state dewetting \cite{mullins1957thermal,mullins1959flattening,cahn1994surface}. For closed curves in two dimensions and closed surfaces in three dimensions, the flow preserves the enclosed area/volume and dissipates the perimeter/surface area. These geometric properties are fundamental both analytically and numerically, especially in long-time simulations.
	
	A variety of numerical approaches have been developed for surface diffusion and related geometric flows, including graph-based finite element methods, level-set formulations, diffuse-interface/phase-field models, and unfitted or trace finite element discretizations; see, for example, \cite{deckelnick2005fully,smereka2003semi,elliott1996cahn,olshanskii2018trace,xu2009ldg}. These approaches provide useful alternatives for handling implicit interfaces, topology changes, or nonparametric representations. In the present work, however, we focus on parametric finite element methods for closed interfaces, pioneered by Dziuk's work \cite{dziuk1994convergence,bansch2004surface,dziuk1990algorithm,dziuk2002evolution}.
	
For parametric descriptions of evolving curves and surfaces, the parametric finite element method (PFEM) developed by Barrett, Garcke and N\"urnberg (BGN) has become a particularly influential framework \cite{barrett2007parametric,barrett2008parametric1,barrett2020parametric}. A key strength of BGN-type methods is that the weak formulation naturally induces tangential motion, which typically leads to good mesh quality and, in the planar case, asymptotic equidistribution of mesh points. For surface diffusion, the classical BGN discretization is first-order in time and has proved robust in practice.

Motivated by the geometric structure of the flow, considerable effort has been devoted to designing PFEMs that preserve enclosed area/volume and respect the dissipative nature of the evolution at the discrete level. In particular, Bao and Zhao proposed a first-order structure-preserving PFEM for surface diffusion that conserves area/volume through an exact geometric identity associated with a linear interpolation between two consecutive time levels \cite{bao2021structure}. This geometric viewpoint has subsequently been extended to other geometric flows, including anisotropic surface diffusion and two-phase Navier--Stokes flow \cite{Bao2023Symmetrized,bao2023symmetrized1,bao2025unified,garcke2023structure}. Another related line of work imposes geometric constraints by Lagrange multipliers at the fully discrete level; see, for example, \cite{garcke2025structure}.

To improve temporal accuracy, several second-order PFEMs based on the BGN framework have been developed in recent years. These include Crank--Nicolson--leapfrog formulations, stable BDF2 discretizations, and predictor--corrector strategies for surface diffusion and related flows \cite{jiang2024second,jiang2024stable,jiang2025predictor}. More recently, Duan and Yang \cite{duan2026second} proposed a second-order volume-preserving PFEM for surface diffusion. Their method attains second-order accuracy and volume preservation through a Crank--Nicolson time discretization, which also draws on the linear-interpolation idea of Bao and Zhao \cite{bao2021structure}. They also incorporate a harmonic-map tangential redistribution for mesh quality and a Lagrange-multiplier treatment for energy stability.

In this work, we extend the geometric conservation idea of Bao and Zhao to the second-order setting in a different way. We derive the conservative term directly from an exact geometric variation identity generated by a quadratic temporal interpolation through three consecutive time levels. Evaluating the induced area/volume variation by Newton--Cotes quadrature yields a Simpson-type formula in 2D and a Boole-type formula in 3D. In this way, the conservative term is built intrinsically from the interpolation path itself.

This construction leads to second-order PFEMs for surface diffusion that preserve the enclosed area/volume without auxiliary Lagrange multipliers.  In particular, the resulting schemes can be viewed as a linear combination of the BGN/CN-leapfrog, BGN/BDF2 and BGN/PC schemes \cite{jiang2024second,jiang2024stable,jiang2025predictor} with a mild modification of the normal vectors.
	
	The main contributions of this paper are summarized as follows:
	\begin{itemize}
		\item We extend the geometric conservation idea of the first-order Bao--Zhao structure-preserving PFEM to the second-order setting by using quadratic temporal interpolation.
		\item We derive exact second-order discrete variation formulas for enclosed area in 2D and enclosed volume in 3D, obtained from Simpson and Boole quadrature, respectively.
		\item We show that the resulting conservative construction is compatible with predictor-based BGN-type implementations and retains the good mesh behavior observed in practice.
	\end{itemize}
	
	The rest of the paper is organized as follows. In Section~\ref{sec:2D}, we present the second-order area-preserving PFEM for closed planar curves. In Section~\ref{sec:3D}, we extend the construction to a second-order volume-preserving PFEM for closed surfaces in three dimensions. Numerical experiments are reported in Section~\ref{sec:numerics}, and concluding remarks are given in the final section.
	
	\section{A second-order area-preserving PFEM for closed curves in 2D}
	\label{sec:2D}
	
	In this section, we construct a second-order-in-time area-preserving parametric finite element method for surface diffusion of closed curves. The notation is chosen to be close to the first-order structure-preserving PFEM.
	
	Let $\Gamma(t)$ be a family of closed curves in $\mathbb{R}^2$, parametrized by
	\[
	\mathbf{X}(\rho,t):\mathbb{I}\times[0,T]\to\mathbb{R}^2,\qquad 
	\mathbb{I}=\mathbb{R}/\mathbb{Z}.
	\]
	Let $s$ be the arclength parameter, $\mathbf{n}=-(\partial_s\mathbf{X})^\perp$ the outward unit normal vector, and $\kappa$ the curvature. The surface diffusion flow is written as
	\begin{equation}
		\mathbf{n}\cdot\partial_t\mathbf{X}=\partial_{ss}\kappa,
		\qquad
		\kappa\mathbf{n}=-\partial_{ss}\mathbf{X}.
		\label{eq:strong-2d}
	\end{equation}
	It is well known that the enclosed area $A(t)$ and the perimeter $L(t)$ satisfy
	\begin{equation}
		\frac{d}{dt}A(t)=0,\qquad
		\frac{d}{dt}L(t)=-\int_{\Gamma(t)}|\partial_s\kappa|^2\,ds\le 0.
	\end{equation}
	
	The weak formulation reads: find $\mathbf{X}(\cdot,t)\in[H^1(\mathbb{I})]^2$ and $\kappa(\cdot,t)\in H^1(\mathbb{I})$ such that
	\begin{subequations}
		\begin{align}
			\left(\mathbf{n}\cdot\partial_t\mathbf{X},\varphi\right)_{\Gamma(t)}
			+\left(\partial_s\kappa,\partial_s\varphi\right)_{\Gamma(t)}
			&=0,
			&&\forall \varphi\in H^1(\mathbb{I}),
			\label{eq:weak-2d-a}
			\\
			\left(\kappa,\mathbf{n}\cdot\boldsymbol{\omega}\right)_{\Gamma(t)}
			-\left(\partial_s\mathbf{X},\partial_s\boldsymbol{\omega}\right)_{\Gamma(t)}
			&=0,
			&&\forall \boldsymbol{\omega}\in[H^1(\mathbb{I})]^2.
			\label{eq:weak-2d-b}
		\end{align}
	\end{subequations}
	
	\subsection{Spatial discretization}
	
	Let $0=\rho_0<\rho_1<\cdots<\rho_N=1$ be a uniform partition of $\mathbb{I}$ with $h=1/N$. Define
	\[
	V^h:=\left\{u\in C(\mathbb{I}):\ u|_{[\rho_{j-1},\rho_j]}\in\mathbb{P}_1,\ 
	u(0)=u(1)\right\}.
	\]
	Let $\Gamma^m=\mathbf{X}^m(\mathbb{I})$ be a polygonal approximation of $\Gamma(t_m)$ with $t_m=m\tau$ and $\mathbf{X}^m\in[V^h]^2$. For piecewise continuous functions $u,v$, define the mass-lumped inner product
	\begin{equation}
		(u,v)^h_{\Gamma^m}
		:=
		\frac12\sum_{j=1}^N |\mathbf{h}_j^m|
		\left[(u\cdot v)(\rho_j^-)+(u\cdot v)(\rho_{j-1}^+)\right],
		\qquad
		\mathbf{h}_j^m:=\mathbf{X}^m(\rho_j)-\mathbf{X}^m(\rho_{j-1}).
	\end{equation}
	
	\subsection{Quadratic interpolation and the exact area identity}
	
	The central idea is to connect $\Gamma^{m-1}$, $\Gamma^m$ and $\Gamma^{m+1}$ by a quadratic polynomial in a local time variable $\alpha\in[0,1]$:
	\begin{equation}
		\begin{aligned}
			\mathbf{X}^h(\alpha)
			&=
			(2\alpha^2-3\alpha+1)\mathbf{X}^{m-1}
			+(-4\alpha^2+4\alpha)\mathbf{X}^{m}
			+(2\alpha^2-\alpha)\mathbf{X}^{m+1}.
		\end{aligned}
		\label{eq:quad-X-2d}
	\end{equation}
	Then
	\[
	\mathbf{X}^h(0)=\mathbf{X}^{m-1},\qquad
	\mathbf{X}^h(1/2)=\mathbf{X}^{m},\qquad
	\mathbf{X}^h(1)=\mathbf{X}^{m+1}.
	\]
	Since the interval $[t_{m-1},t_{m+1}]$ has length $2\tau$, we have
	\[
	t=t_{m-1}+2\tau\alpha,
	\qquad
	\partial_t=\frac{1}{2\tau}\partial_\alpha.
	\]
	Evaluating $\frac{1}{2\tau}\partial_\alpha\mathbf{X}^h(\alpha)$ at $\alpha=0,1/2,1$ gives
	\begin{align}
		\partial_t \mathbf{X}^{m-1}
		&:=
		\frac{-\mathbf{X}^{m+1}+4\mathbf{X}^{m}-3\mathbf{X}^{m-1}}{2\tau},
		\label{eq:D-mminus}
		\\
		\partial_t \mathbf{X}^{m}
		&:=
		\frac{\mathbf{X}^{m+1}-\mathbf{X}^{m-1}}{2\tau},
		\label{eq:D-m}
		\\
		\partial_t \mathbf{X}^{m+1}
		&:=
		\frac{3\mathbf{X}^{m+1}-4\mathbf{X}^{m}+\mathbf{X}^{m-1}}{2\tau}.
		\label{eq:D-mplus}
	\end{align}
	These formulas are the quadratic special cases of the finite-difference coefficients induced by polynomial interpolation on equidistant temporal nodes; see Appendix~\ref{app:fd-coeff} for a general statement. Here $\partial_t \mathbf{X}^{m+1}$ is exactly the BDF2 approximation used in BGN/BDF schemes \cite{jiang2024stable}, $\partial_t \mathbf{X}^{m}$ is the Crank-Nicolson approximation used in BGN/CN-leapfrog schemes, and $\partial_t \mathbf{X}^{m-1}$ is the corresponding second-order backward endpoint approximation.
	
	Let $A(\alpha)$ be the area enclosed by $\Gamma^h(\alpha)=\mathbf{X}^h(\alpha)(\mathbb{I})$. Then
	\begin{equation}
		\begin{aligned}
			\frac{d}{d\alpha}A(\alpha)
			&=
			\int_{\Gamma^h(\alpha)}
			\partial_\alpha\mathbf{X}^h(\alpha)\cdot \mathbf{n}^h(\alpha)\,ds
			\\
			&=
			\int_{\mathbb{I}}
			\partial_\alpha\mathbf{X}^h(\alpha)\cdot
			\left[-\partial_\rho\mathbf{X}^h(\alpha)\right]^\perp\,d\rho.
		\end{aligned}
		\label{eq:dA-dalpha}
	\end{equation}
	Since $\partial_\alpha\mathbf{X}^h(\alpha)$ is linear in $\alpha$ and $\partial_\rho\mathbf{X}^h(\alpha)$ is quadratic in $\alpha$, the integrand in \eqref{eq:dA-dalpha} is a polynomial of degree at most three in $\alpha$. Therefore Simpson's rule is exact and gives
	\begin{equation}
		\begin{aligned}
			A^{m+1}-A^{m-1}
			& = \frac{1}{6}\int_{\mathbb{I}}\left[-\mathbf{X}^{m+1}+4\mathbf{X}^m-3\mathbf{X}^{m-1}\right] \cdot\left[-\partial_\rho \mathbf{X}^{m-1}\right]^{\perp} \mathrm{d} \rho  \\ 
			& + \frac{4}{6}\int_{\mathbb{I}}\left[\mathbf{X}^{m+1}-\mathbf{X}^{m-1}\right] \cdot\left[-\partial_\rho \mathbf{X}^m\right]^{\perp} \mathrm{d} \rho \\
			& + \frac{1}{6}\int_{\mathbb{I}}\left[3\mathbf{X}^{m+1}-4\mathbf{X}^m+\mathbf{X}^{m-1}\right] \cdot\left[-\partial_\rho \mathbf{X}^{m+1}\right]^{\perp} \mathrm{d} \rho .
		\end{aligned}
		\label{eq:area-identity-2d}
	\end{equation}
	
	\begin{remark}
		The identity \eqref{eq:area-identity-2d} is the geometric reason for exact area preservation. It is not a usual BDF2 identity, but an exact area variation formula along the quadratic interpolation path \eqref{eq:quad-X-2d}. The three velocity approximations naturally correspond to the three time levels $m-1$, $m$ and $m+1$. 
	\end{remark}
	
	\subsection{Prediction and the second-order area-preserving scheme}
	
	In the practical scheme, the new-time curve used in the differential operators is replaced by a predicted curve $\widetilde{\Gamma}^{m+1}:=\widetilde{\mathbf{X}}^{m+1}(\mathbb{I})$. Following the BGN/BDF strategy, $\widetilde{\mathbf{X}}^{m+1}$ is obtained by one step of the classical first-order BGN scheme from $\Gamma^m$ with time step $\tau$: find $\widetilde{\mathbf{X}}^{m+1}\in[V^h]^2$ and $\widetilde{\kappa}^{m+1}\in V^h$ such that
	\begin{subequations}
		\label{eq:BGN1-predictor-2d}
		\begin{align}
			\left(
			\frac{\widetilde{\mathbf{X}}^{m+1}-\mathbf{X}^{m}}{\tau},
			\varphi^h\mathbf{n}^{m}
			\right)^h_{\Gamma^{m}}
			+
			\left(
			\partial_s\widetilde{\kappa}^{m+1},
			\partial_s\varphi^h
			\right)_{\Gamma^{m}}
			&=0,
			\qquad \forall \varphi^h\in V^h,
			\\
			\left(
			\widetilde{\kappa}^{m+1},
			\mathbf{n}^{m}\cdot\boldsymbol{\omega}^h
			\right)^h_{\Gamma^{m}}
			-
			\left(
			\partial_s\widetilde{\mathbf{X}}^{m+1},
			\partial_s\boldsymbol{\omega}^h
			\right)_{\Gamma^{m}}
			&=0,
			\qquad \forall \boldsymbol{\omega}^h\in[V^h]^2.
		\end{align}
	\end{subequations}
The predicted curve is used as an auxiliary new-time geometry for assembling the mass-lumped and stiffness terms and for evaluating the metric factor in the normal vector below. The exact area conservation is not affected, because the normal in the normal velocity term is designed by
	\begin{equation}
		\mathbf{n}^{m+1}
		:=
		-\frac{(\partial_\rho\mathbf{X}^{m+1})^\perp}
		{|\partial_\rho\widetilde{\mathbf{X}}^{m+1}|}.
		\label{eq:sp-normal-2d}
	\end{equation}
	Thus $|\partial_\rho\widetilde{\mathbf{X}}^{m+1}|\mathbf{n}^{m+1}=-(\partial_\rho\mathbf{X}^{m+1})^\perp$, and the metric factor in the mass-lumped inner product on $\widetilde{\Gamma}^{m+1}$ reproduces the exact oriented area density in \eqref{eq:area-identity-2d}.
	
	\begin{remark}[Relation with BGN-based second-order methods]
		The use of a BGN1-predicted geometry follows the BGN/BDF strategy and avoids the mesh deterioration that may arise from direct extrapolation. Although the middle term below contains the centered CN-leapfrog approximation, no additional mesh redistribution step is introduced; the good mesh behavior is inherited from the BGN1 predictor.
	\end{remark}
	
	We now present the practical second-order area-preserving PFEM. Given $(\mathbf{X}^{m-1},\kappa^{m-1})$ and $(\mathbf{X}^{m},\kappa^{m})$, and the predicted curve $\widetilde{\Gamma}^{m+1}$ obtained from \eqref{eq:BGN1-predictor-2d}, find $(\mathbf{X}^{m+1},\kappa^{m+1})\in[V^h]^2\times V^h$ such that
	\begin{subequations}
		\label{eq:scheme-2d}
		\begin{align}
			&
			\frac16\left[
			\left(
			\frac{\frac32\mathbf{X}^{m+1}-2\mathbf{X}^{m}+\frac12\mathbf{X}^{m-1}}{\tau},
			\varphi^h\mathbf{n}^{m+1}
			\right)^h_{\widetilde{\Gamma}^{m+1}}
			+
			\left(
			\partial_s\kappa^{m+1},
			\partial_s\varphi^h
			\right)_{\widetilde{\Gamma}^{m+1}}
			\right]
			\nonumber\\
			&\quad
			+\frac46\left[
			\left(
			\frac{\mathbf{X}^{m+1}-\mathbf{X}^{m-1}}{2\tau},
			\varphi^h\mathbf{n}^{m}
			\right)^h_{\Gamma^{m}}
			+
			\left(
			\frac{\partial_s\kappa^{m+1}+\partial_s\kappa^{m-1}}{2},
			\partial_s\varphi^h
			\right)_{\Gamma^{m}}
			\right]
			\nonumber\\
			&\quad
			+\frac16\left[
			\left(
			\frac{-\frac12\mathbf{X}^{m+1}+2\mathbf{X}^{m}-\frac32\mathbf{X}^{m-1}}{\tau},
			\varphi^h\mathbf{n}^{m-1}
			\right)^h_{\Gamma^{m-1}}
			+
			\left(
			\partial_s\kappa^{m-1},
			\partial_s\varphi^h
			\right)_{\Gamma^{m-1}}
			\right]
			=0,
			\quad \forall \varphi^h\in V^h,
			\label{eq:scheme-2d-a}
			\\
			&
			\left(
			\kappa^{m+1},
			\widetilde{\mathbf{n}}^{m+1}\cdot\boldsymbol{\omega}^h
			\right)^h_{\widetilde{\Gamma}^{m+1}}
			-
			\left(
			\partial_s\mathbf{X}^{m+1},
			\partial_s\boldsymbol{\omega}^h
			\right)_{\widetilde{\Gamma}^{m+1}}
			=0,
			\quad
			\forall \boldsymbol{\omega}^h\in[V^h]^2.
			\label{eq:scheme-2d-b}
		\end{align}
	\end{subequations}
	We refer to \eqref{eq:scheme-2d} as the BGN/AC scheme, where AC stands for area conserving. Here $\partial_s$ is computed on the curve indicated by the subscript, $\mathbf{n}^{m}$ and $\mathbf{n}^{m-1}$ are the usual outward unit normals of $\Gamma^m$ and $\Gamma^{m-1}$, and $\widetilde{\mathbf{n}}^{m+1}:=-(\partial_\rho\widetilde{\mathbf{X}}^{m+1})^\perp/|\partial_\rho\widetilde{\mathbf{X}}^{m+1}|$ is the predictor normal used only in the curvature equation.
	
	\begin{theorem}[Exact area conservation]
		\label{thm:area-conservation-2d}
		Let $(\mathbf{X}^{m+1},\kappa^{m+1})$ be a solution of \eqref{eq:scheme-2d}. Then $A^{m+1}=A^{m-1}$. If the first step is computed by the first-order structure-preserving PFEM so that $A^1=A^0$, then $A^m=A^0$ for all $m\ge 0$.
	\end{theorem}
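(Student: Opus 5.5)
Test \eqref{eq:scheme-2d-a} with $\varphi^h\equiv 1\in V^h$. Every stiffness term then vanishes, because $\partial_s\varphi^h=0$ on each curve. This leaves $\tau^{-1}$ times a weighted sum of three mass-lumped normal-velocity terms, and I will show that this sum equals $\frac{1}{2\tau}(A^{m+1}-A^{m-1})$ by matching it term by term with the Simpson identity \eqref{eq:area-identity-2d}.

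For each term I rewrite the mass-lumped product as an integral over $\mathbb{I}$ with respect to $\rho$. On $\Gamma^m$ the integrand $\mathbf{V}\cdot\mathbf{n}^m$ with $\mathbf{V}$ piecewise linear gives
\[
\left(\mathbf{V},\mathbf{n}^m\right)^h_{\Gamma^m}=\sum_j |\mathbf{h}_j^m|\,\tfrac12[\mathbf{V}(\rho_j^-)+\mathbf{V}(\rho_{j-1}^+)]\cdot\mathbf{n}^m_j .
\]
Here $\mathbf{n}^m_j=-(\mathbf{h}_j^m)^\perp/|\mathbf{h}_j^m|$ is constant on the element. The trapezoidal rule is exact for linear functions, and $\partial_\rho\mathbf{X}^m=\mathbf{h}_j^m/h$ on the element. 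Hence this equals $\int_{\mathbb{I}}\mathbf{V}\cdot[-\partial_\rho\mathbf{X}^m]^\perp\,d\rho$ exactly. The same holds on $\Gamma^{m-1}$.

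The new-time term needs the designed normal \eqref{eq:sp-normal-2d}. On element $j$ the lumped weight is $|\widetilde{\mathbf{h}}_j^{m+1}|$, and $|\widetilde{\mathbf{h}}_j^{m+1}|\,\mathbf{n}^{m+1}=-(\mathbf{h}_j^{m+1})^\perp$, since $|\partial_\rho\widetilde{\mathbf{X}}^{m+1}|=|\widetilde{\mathbf{h}}_j^{m+1}|/h$. The predictor metric therefore cancels, and this term also becomes $\int_{\mathbb{I}}\mathbf{V}\cdot[-\partial_\rho\mathbf{X}^{m+1}]^\perp d\rho$. The main point to check is this cancellation, in particular that $\mathbf{n}^{m+1}$ is elementwise constant, so that the trapezoidal rule applied to the linear $\mathbf{V}$ remains exact.

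Next I collect the coefficients. The first bracket carries $\frac16\cdot\frac{1}{2\tau}(3\mathbf{X}^{m+1}-4\mathbf{X}^m+\mathbf{X}^{m-1})$. The middle one carries $\frac46\cdot\frac1{2\tau}(\mathbf{X}^{m+1}-\mathbf{X}^{m-1})$, and the last carries $\frac16\cdot\frac1{2\tau}(-\mathbf{X}^{m+1}+4\mathbf{X}^m-3\mathbf{X}^{m-1})$. Comparing with \eqref{eq:area-identity-2d}, the tested equation reads $\frac{1}{2\tau}(A^{m+1}-A^{m-1})=0$, so $A^{m+1}=A^{m-1}$. Induction on even and odd indices separately, starting from $A^1=A^0$, gives $A^m=A^0$ for all $m$.

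For completeness I would also justify \eqref{eq:area-identity-2d} itself. The quantity $A(\alpha)$ is the oriented area $\frac12\int \mathbf{X}^h\cdot[-\partial_\rho\mathbf{X}^h]^\perp d\rho$ (up to the sign convention), so its derivative is the stated integral, with a periodic integration by parts. That integrand is cubic in $\alpha$, so Simpson's rule on $[0,1]$ is exact. The factor $\partial_\alpha=2\tau\,\partial_t$ accounts for the $\frac{1}{6}$ and $\frac{4}{6}$ weights.
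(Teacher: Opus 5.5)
Your proof is correct and follows the paper's argument. Like the paper, you test \eqref{eq:scheme-2d-a} with a constant (the paper takes $\varphi^h\equiv 2\tau$, you take $1$), drop the stiffness terms, identify the three normal-velocity terms with the Simpson identity \eqref{eq:area-identity-2d} through the metric cancellation built into \eqref{eq:sp-normal-2d}, and then induct from $A^1=A^0$. Your explicit check that mass lumping is exact here (elementwise-constant normals against piecewise-linear velocities) fills in a step the paper leaves implicit. One phrasing slip: the weights $\frac16,\frac46$ come from Simpson's rule, not from $\partial_\alpha=2\tau\,\partial_t$; that factor only accounts for the prefactor $\frac{1}{2\tau}$ in front of $A^{m+1}-A^{m-1}$.
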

	
	\begin{proof}
		Taking $\varphi^h\equiv2\tau$ in \eqref{eq:scheme-2d-a}, all curvature-diffusion terms vanish. The remaining three normal velocity terms are exactly the Simpson representation of the area difference in \eqref{eq:area-identity-2d}; the only point to note is that \eqref{eq:sp-normal-2d} cancels the metric factor of $\widetilde{\Gamma}^{m+1}$. Hence $A^{m+1}-A^{m-1}=0$. The full conservation follows by induction from $A^1=A^0$.
	\end{proof}
	
	\begin{remark}[Starting step]
		The scheme \eqref{eq:scheme-2d} is a two-step method and starts from $m=1$. To obtain area conservation at all time levels, we compute the first step by the first-order structure-preserving PFEM of Bao and Zhao \cite{bao2021structure}, which gives $A^1=A^0$. For all later steps, the predicted curve is generated by the classical BGN1 scheme.
	\end{remark}
	
\begin{remark}[Compatibility with Lagrange-multiplier energy stabilization]
	The proposed BGN/AC scheme can also be combined with a Lagrange-multiplier treatment of the perimeter dissipation law, in the spirit of \cite{garcke2025structure}. At the continuous level, one may replace the normal velocity equation by
	\[
	\mathbf n\cdot\partial_t\mathbf X=(1+\lambda(t))\partial_{ss}\kappa,
	\qquad
	\kappa\mathbf n=-\partial_{ss}\mathbf X,
	\qquad
	\frac{dL}{dt}=-\int_{\Gamma(t)}|\partial_s\kappa|^2\,ds .
	\]
	and hence $\lambda(t)=0$ whenever $\partial_s\kappa\not\equiv0$.  At the fully discrete level, the multiplier only modifies the curvature-diffusion terms and therefore does not affect the proof of exact area conservation, since these terms vanish when the constant test function is used. The same observation applies to the surface case by replacing $\partial_{ss}\kappa$ with $\Delta_S H$ and the perimeter with the surface area. We do not write this variant explicitly in order to keep the presentation focused on the Simpson--Boole geometric conservation mechanism.
\end{remark}
	
	\subsection{Iterative solver for the nonlinear system}
	\label{subsec:solver-2d}
	
	The fully discrete scheme \eqref{eq:scheme-2d} is nonlinear because the structure-preserving normal
	\[
	\mathbf{n}^{m+1}
	=
	-\frac{(\partial_\rho\mathbf{X}^{m+1})^\perp}
	{|\partial_\rho\widetilde{\mathbf{X}}^{m+1}|}
	\]
	depends on the unknown curve $\mathbf{X}^{m+1}$. In this subsection, we describe the Newton-type iterative solver used in the computations. The construction follows the iterative strategy used in the first-order structure-preserving PFEM, with the only difference that the residual now contains the Simpson-weighted three-level terms.
	
	Let
	$ (\mathbf{X}^{m+1,l},\kappa^{m+1,l})$
	be the approximation at the $l$-th Newton iteration. We define
	$
	\mathbf{n}^{m+1,l}
	:=
	-\frac{(\partial_\rho\mathbf{X}^{m+1,l})^\perp}
	{|\partial_\rho\widetilde{\mathbf{X}}^{m+1}|}.
	$
	The increments are denoted by
	\[
	\delta\mathbf{X}^{l}
	:=
	\mathbf{X}^{m+1,l+1}-\mathbf{X}^{m+1,l},
	\qquad
	\delta\kappa^{l}
	:=
	\kappa^{m+1,l+1}-\kappa^{m+1,l},
	\qquad
	\delta\mathbf{n}^{l}
	=
	-\frac{(\partial_\rho\delta\mathbf{X}^{l})^\perp}
	{|\partial_\rho\widetilde{\mathbf{X}}^{m+1}|}.
	\]

	At each Newton iteration, we solve for
	$(\delta\mathbf{X}^{l},\delta\kappa^{l})\in [V^h]^2\times V^h$
	such that, for all $\varphi^h\in V^h$ and $\boldsymbol{\omega}^h\in[V^h]^2$,
	\begin{subequations}
		\label{eq:newton-2d}
		\begin{align}
			&
			\frac16
			\left[
			\left(
			\frac{3}{2\tau}\delta\mathbf{X}^{l},
			\varphi^h\mathbf{n}^{m+1,l}
			\right)^h_{\widetilde{\Gamma}^{m+1}}
			+
			\left(
			\frac{\frac32\mathbf{X}^{m+1,l}-2\mathbf{X}^{m}+\frac12\mathbf{X}^{m-1}}{\tau},
			\varphi^h\delta\mathbf{n}^{l}
			\right)^h_{\widetilde{\Gamma}^{m+1}}
			+
			\left(
			\partial_s\delta\kappa^{l},
			\partial_s\varphi^h
			\right)_{\widetilde{\Gamma}^{m+1}}
			\right]
			\nonumber\\
			&\quad
			+\frac46
			\left[
			\left(
			\frac{\delta\mathbf{X}^{l}}{2\tau},
			\varphi^h\mathbf{n}^{m}
			\right)^h_{\Gamma^{m}}
			+
			\left(
			\frac{\partial_s\delta\kappa^{l}}{2},
			\partial_s\varphi^h
			\right)_{\Gamma^{m}}
			\right]
			+\frac16
			\left[
			\left(
			-\frac{\delta\mathbf{X}^{l}}{2\tau},
			\varphi^h\mathbf{n}^{m-1}
			\right)^h_{\Gamma^{m-1}}
			\right]
			=
			-\mathcal{R}_1^{l}(\varphi^h),
			\label{eq:newton-2d-a}
			\\
			&
			\left(
			\delta\kappa^{l},
			\widetilde{\mathbf{n}}^{m+1}\cdot\boldsymbol{\omega}^h
			\right)^h_{\widetilde{\Gamma}^{m+1}}
			-
			\left(
			\partial_s\delta\mathbf{X}^{l},
			\partial_s\boldsymbol{\omega}^h
			\right)_{\widetilde{\Gamma}^{m+1}}
			=
			-\mathcal{R}_2^{l}(\boldsymbol{\omega}^h).
			\label{eq:newton-2d-b}
		\end{align}
	\end{subequations}
	Here $\mathcal{R}_1^l$ and $\mathcal{R}_2^l$ are the residuals of \eqref{eq:scheme-2d-a} and \eqref{eq:scheme-2d-b} evaluated at 
	$(\mathbf{X}^{m+1,l},\kappa^{m+1,l})$. More precisely,
	\begin{align}
		\mathcal{R}_1^{l}(\varphi^h)
		:={}&
		\frac16\left[
		\left(
		\frac{\frac32\mathbf{X}^{m+1,l}-2\mathbf{X}^{m}+\frac12\mathbf{X}^{m-1}}{\tau},
		\varphi^h\mathbf{n}^{m+1,l}
		\right)^h_{\widetilde{\Gamma}^{m+1}}
		+
		\left(
		\partial_s\kappa^{m+1,l},
		\partial_s\varphi^h
		\right)_{\widetilde{\Gamma}^{m+1}}
		\right]
		\nonumber\\
		&+
		\frac46\left[
		\left(
		\frac{\mathbf{X}^{m+1,l}-\mathbf{X}^{m-1}}{2\tau},
		\varphi^h\mathbf{n}^{m}
		\right)^h_{\Gamma^{m}}
		+
		\left(
		\frac{\partial_s\kappa^{m+1,l}+\partial_s\kappa^{m-1}}{2},
		\partial_s\varphi^h
		\right)_{\Gamma^{m}}
		\right]
		\nonumber\\
		&+
		\frac16\left[
		\left(
		\frac{-\frac12\mathbf{X}^{m+1,l}+2\mathbf{X}^{m}-\frac32\mathbf{X}^{m-1}}{\tau},
		\varphi^h\mathbf{n}^{m-1}
		\right)^h_{\Gamma^{m-1}}
		+
		\left(
		\partial_s\kappa^{m-1},
		\partial_s\varphi^h
		\right)_{\Gamma^{m-1}}
		\right],
		\label{eq:R1-2d}
	\end{align}
	and
	\begin{equation}
		\mathcal{R}_2^{l}(\boldsymbol{\omega}^h)
		:=
		\left(
		\kappa^{m+1,l},
		\widetilde{\mathbf{n}}^{m+1}\cdot\boldsymbol{\omega}^h
		\right)^h_{\widetilde{\Gamma}^{m+1}}
		-
		\left(
		\partial_s\mathbf{X}^{m+1,l},
		\partial_s\boldsymbol{\omega}^h
		\right)_{\widetilde{\Gamma}^{m+1}}.
		\label{eq:R2-2d}
	\end{equation}
	After solving \eqref{eq:newton-2d}, we update
	\[
	\mathbf{X}^{m+1,l+1}
	=
	\mathbf{X}^{m+1,l}+\delta\mathbf{X}^{l},
	\qquad
	\kappa^{m+1,l+1}
	=
	\kappa^{m+1,l}+\delta\kappa^{l}.
	\]
	The iteration is stopped once
	\[
	\max\left\{
	\|\delta\mathbf{X}^{l}\|_{\ell^\infty},
	\|\delta\kappa^{l}\|_{\ell^\infty}
	\right\}
	\le \mathrm{tol}.
	\]
	
	The initial guess is chosen as
	$
	\mathbf{X}^{m+1,0}=\widetilde{\mathbf{X}}^{m+1},
	\kappa^{m+1,0}=\kappa^m,
	$
	where $\widetilde{\mathbf{X}}^{m+1}$ is obtained by one BGN1 predictor step. This choice is important in practice: it reduces the number of Newton iterations and inherits the good mesh distribution property of the classical BGN1 scheme.
	
	
	\begin{remark}
		In \eqref{eq:newton-2d-b}, we have used the predictor normal
		\[
		\widetilde{\mathbf{n}}^{m+1}
		=
		-\frac{(\partial_\rho\widetilde{\mathbf{X}}^{m+1})^\perp}
		{|\partial_\rho\widetilde{\mathbf{X}}^{m+1}|}.
		\]
		This is consistent with the practical implementation. The exact area conservation proof only uses the structure-preserving normal vector in the normal velocity equation \eqref{eq:scheme-2d-a}. Therefore, using the predictor normal in the curvature equation \eqref{eq:scheme-2d-b} does not affect the exact area conservation, while it reduces the nonlinearity of the Newton system.
	\end{remark}
	
\section{A second-order volume-preserving PFEM for closed surfaces in 3D}
\label{sec:3D}

We now extend the construction of Section~\ref{sec:2D} to the surface diffusion of closed surfaces in $\mathbb{R}^3$. The presentation parallels the two-dimensional case, but the exact conservation identity now involves the oriented area vector of a surface patch and therefore requires a higher-order quadrature formula.

Let $S(t)$ be a closed evolving surface parametrized by
$
\mathbf{X}(\cdot,t):S_0\to\mathbb{R}^3,
$
where $S_0$ is a fixed smooth reference surface. The surface diffusion flow reads
\begin{equation}
	\mathbf{n}\cdot\partial_t\mathbf{X}=\Delta_S H,
	\qquad
	H\mathbf{n}=-\Delta_S\mathbf{X},
	\label{eq:strong-3d}
\end{equation}
where $H$ is the scalar mean curvature, $\mathbf{n}$ is the outward unit normal vector, and $\Delta_S$ is the Laplace--Beltrami operator on $S(t)$. The enclosed volume $V(t)$ and the total surface area $A(t)$ satisfy
\[
\frac{d}{dt}V(t)=0,\qquad
\frac{d}{dt}A(t)=-\int_{S(t)}|\nabla_S H|^2\,dA\le 0.
\]

\subsection{Weak formulation and spatial discretization}

For notational simplicity, scalar and vector fields on $S(t)$ are identified with their pullbacks to the reference surface $S_0$ through the parametrization $\mathbf{X}(\cdot,t)$. With this convention, the weak formulation reads: find $\mathbf{X}(\cdot,t)\in[H^1(S_0)]^3$ and $H(\cdot,t)\in H^1(S_0)$ such that
\begin{subequations}
	\begin{align}
		\left(
		\mathbf{n}\cdot\partial_t\mathbf{X},
		\varphi
		\right)_{S(t)}
		+
		\left(
		\nabla_{S(t)}H,
		\nabla_{S(t)}\varphi
		\right)_{S(t)}
		&=0,
		\qquad \forall \varphi\in H^1(S_0),
		\label{eq:weak-3d-a}
		\\
		\left(
		H,
		\mathbf{n}\cdot\boldsymbol{\omega}
		\right)_{S(t)}
		-
		\left(
		\nabla_{S(t)}\mathbf{X},
		\nabla_{S(t)}\boldsymbol{\omega}
		\right)_{S(t)}
		&=0,
		\qquad \forall \boldsymbol{\omega}\in[H^1(S_0)]^3.
		\label{eq:weak-3d-b}
	\end{align}
\end{subequations}
Here
\[
\left(
\nabla_{S(t)}\mathbf{X},
\nabla_{S(t)}\boldsymbol{\omega}
\right)_{S(t)}
:=
\sum_{\ell=1}^3
\left(
\nabla_{S(t)}X_\ell,
\nabla_{S(t)}\omega_\ell
\right)_{S(t)}.
\]

We now introduce the discrete notation used below. Let
$
\widehat S_h=\bigcup_{j=1}^J \widehat\sigma_j
$
be a fixed triangulated approximation of $S_0$ with no hanging vertices, where each $\widehat\sigma_j$ is a nondegenerate triangle. The discrete surfaces at different time levels are represented by nodal maps
\[
\mathbf{X}^m\in[\mathcal V^h]^3,\qquad
S^m=\mathbf{X}^m(\widehat S_h)
=
\bigcup_{j=1}^J \sigma_j^m,
\]
where
\[
\sigma_j^m
=
\triangle\{\mathbf q_{j1}^m,\mathbf q_{j2}^m,\mathbf q_{j3}^m\},
\qquad
\mathbf q_{jk}^m:=\mathbf{X}^m(\widehat{\mathbf q}_{jk}).
\]
The vertices of each triangle are ordered so that the induced normal is outward. The scalar finite element space on the reference triangulation is
\[
\mathcal V^h
:=
\left\{
u\in C(\widehat S_h):\
u|_{\widehat\sigma_j}\in\mathbb P_1,\quad j=1,\ldots,J
\right\}.
\]

For a nodal map $\mathbf Z\in[\mathcal V^h]^3$, we write
\[
S_{\mathbf Z}:=\mathbf Z(\widehat S_h)
\]
for the corresponding discrete surface. Functions in $\mathcal V^h$ are naturally identified with continuous piecewise linear functions on $S_{\mathbf Z}$ through the same nodal values and mesh connectivity. This convention is used throughout the 3D scheme when a quantity such as $H^{m+1}$ is assembled on a predicted surface $\widetilde S^{m+1}$.

For a triangle
\[
\sigma_j(\mathbf Z)
=
\triangle\{\mathbf z_{j1},\mathbf z_{j2},\mathbf z_{j3}\},
\qquad
\mathbf z_{jk}:=\mathbf Z(\widehat{\mathbf q}_{jk}),
\]
define its oriented area vector by
\begin{equation}
	\mathcal J_j(\mathbf Z)
	:=
	(\mathbf z_{j2}-\mathbf z_{j1})
	\times
	(\mathbf z_{j3}-\mathbf z_{j1}).
	\label{eq:discrete-J-3d}
\end{equation}
Then
$
|\sigma_j(\mathbf Z)|
=
\frac12|\mathcal J_j(\mathbf Z)|,
\mathbf n_j(\mathbf Z)
=
\frac{\mathcal J_j(\mathbf Z)}
{|\mathcal J_j(\mathbf Z)|}.
$
By abuse of notation, $\mathcal J(\mathbf Z)$ and $\mathbf n(\mathbf Z)$ also denote the corresponding piecewise constant vector fields on $\widehat S_h$ and $S_{\mathbf Z}$, respectively, with
$
\mathcal J(\mathbf Z)|_{\widehat\sigma_j}
=
\mathcal J_j(\mathbf Z),
\mathbf n(\mathbf Z)|_{\sigma_j(\mathbf Z)}
=
\mathbf n_j(\mathbf Z).
$

For scalar or vector-valued piecewise continuous functions $u,v$ on $S_{\mathbf Z}$, we define
$
(u,v)_{S_{\mathbf Z}}
:=
\int_{S_{\mathbf Z}} u\cdot v\,dA,
$
and the mass-lumped inner product
\begin{equation}
	(u,v)^h_{S_{\mathbf Z}}
	:=
	\frac13
	\sum_{j=1}^J
	|\sigma_j(\mathbf Z)|
	\sum_{k=1}^3
	(u\cdot v)(\mathbf z_{jk}).
	\label{eq:mass-lumped-3d}
\end{equation}

The discrete surface gradient is constant on each triangle. For
$f\in\mathcal V^h$, its surface gradient on $\sigma_j(\mathbf Z)$ is defined by
\begin{equation}
	\begin{aligned}
		\left(\nabla_{S_{\mathbf Z}} f\right)|_{\sigma_j(\mathbf Z)}
		=
		\frac{
			\left[
			f_{j1}(\mathbf z_{j3}-\mathbf z_{j2})
			+f_{j2}(\mathbf z_{j1}-\mathbf z_{j3})
			+f_{j3}(\mathbf z_{j2}-\mathbf z_{j1})
			\right]
			\times \mathbf n_j(\mathbf Z)
		}
		{|\mathcal J_j(\mathbf Z)|},
	\end{aligned}
	\label{eq:surface-gradient-3d}
\end{equation}
where $f_{jk}:=f(\widehat{\mathbf q}_{jk})$.

For later use, the discrete surface area and enclosed volume of $S_{\mathbf Z}$ are
defined by
\begin{equation}
	A(\mathbf Z)
	:=
	\sum_{j=1}^J |\sigma_j(\mathbf Z)|,
	\qquad
	V(\mathbf Z)
	:=
	\frac{1}{18}
	\sum_{j=1}^J
	\sum_{k=1}^3
	\mathbf z_{jk}\cdot \mathcal J_j(\mathbf Z).
	\label{eq:discrete-area-volume-3d}
\end{equation}
In particular, $A^m:=A(\mathbf X^m)$ and $V^m:=V(\mathbf X^m)$. We assume throughout that all discrete and predicted surfaces used in the scheme are nondegenerate, i.e.,
$
\min_{1\le j\le J}|\mathcal J_j(\mathbf Z)|>0
$
for the corresponding nodal maps $\mathbf Z$.

	\subsection{Quadratic interpolation and Boole's formula}
	
	As in the two-dimensional case, define
	\begin{equation}
		\begin{aligned}
			\mathbf{X}^h(\alpha)
			&=
			(2\alpha^2-3\alpha+1)\mathbf{X}^{m-1}
			+(-4\alpha^2+4\alpha)\mathbf{X}^{m}
			+(2\alpha^2-\alpha)\mathbf{X}^{m+1},
			\qquad 0\le\alpha\le1.
		\end{aligned}
		\label{eq:quad-Y-3d}
	\end{equation}
	The five quadrature points are
	\[
	\alpha=0,\quad \frac14,\quad \frac12,\quad \frac34,\quad 1,
	\]
	corresponding to
	\[
	t_{m-1},\quad t_{m-\frac12},\quad t_m,\quad t_{m+\frac12},\quad t_{m+1}.
	\]
	The corresponding interpolated positions are
	\begin{align}
		&\mathbf{X}^{m-1} := \mathbf{X}^h(0), \quad \mathbf{X}^{m} := \mathbf{X}^h\!\left(\frac12\right), \quad \mathbf{X}^{m+1} := \mathbf{X}^h(1),
		\\
		&\mathbf{X}^{m-\frac12}
		:=\mathbf{X}^h\!\left(\frac14\right)
		=
		\frac38\mathbf{X}^{m-1}
		+\frac34\mathbf{X}^{m}
		-\frac18\mathbf{X}^{m+1},\\
		&\mathbf{X}^{m+\frac12}
		:=
		\mathbf{X}^h\!\left(\frac34\right)
		=
		-\frac18\mathbf{X}^{m-1}
		+\frac34\mathbf{X}^{m}
		+\frac38\mathbf{X}^{m+1}.
		\label{eq:Y-half}
	\end{align}
	The endpoint and midpoint velocity approximations are the same as those in \eqref{eq:D-mminus}--\eqref{eq:D-mplus}. The two additional Boole quadrature points give
	\begin{align}
		\partial_t\mathbf X^{m-\frac12}
		&:=
		\frac{\mathbf X^{m}-\mathbf X^{m-1}}{\tau},
		\label{eq:D-3d-mhalf}
		\\
		\partial_t\mathbf X^{m+\frac12}
		&:=
		\frac{\mathbf X^{m+1}-\mathbf X^{m}}{\tau}.
		\label{eq:D-3d-phalf}
	\end{align}
	These half-point values give the predictor-corrector half-step velocities used in the practical scheme below.
	
	For the interpolated surface, we use the same notation $\mathcal J(\mathbf X^h(\alpha))$ for the oriented area vector defined elementwise as in \eqref{eq:discrete-J-3d}. Let $V(\alpha)$ be the volume enclosed by $S^h(\alpha)=\mathbf X^h(\alpha)(S_0)$. Then
	\begin{equation}
		\begin{aligned}
			\frac{d}{d\alpha}V(\alpha)
			=
			\int_{S_0}
			\partial_\alpha\mathbf{X}^h(\alpha)\cdot
			\mathcal{J}(\mathbf{X}^h(\alpha))\,dA_0.
		\end{aligned}
		\label{eq:dV-dalpha}
	\end{equation}
	Here $\partial_\alpha\mathbf X^h$ is linear in $\alpha$. Moreover, for each fixed reference triangle, the two edge vectors entering $\mathcal J(\mathbf X^h(\alpha))$ are quadratic polynomials in $\alpha$, and hence their cross product is of degree at most four. Therefore the oriented volume density in \eqref{eq:dV-dalpha} is a polynomial of degree at most five in $\alpha$. Boole's rule is therefore exact, and we obtain
	\begin{equation}
		\begin{aligned}
			V^{m+1}-V^{m-1}= & \int_0^1 \frac{\mathrm{~d}}{\mathrm{~d} \alpha} V(\alpha) \mathrm{d} \alpha \\
			= & \int_0^1 \int_{S^h(\alpha)}\partial_\alpha \mathbf{X}^h(\alpha) \cdot \boldsymbol{n}^h(\alpha) \mathrm{d} A \mathrm{~d} \alpha \\
			= & 
			\int_{\widehat S_h}\frac{1}{90}(7[-\mathbf{X}^{m+1}+4\mathbf{X}^m-3\mathbf{X}^{m-1}] \cdot \mathcal{J}(\mathbf{X}^{m-1})+32[2\mathbf{X}^m-2\mathbf{X}^{m-1}] \cdot \\ &\mathcal{J}(\frac{3}{8}\mathbf{X}^{m-1} + \frac{3}{4}\mathbf{X}^{m} - \frac{1}{8}\mathbf{X}^{m+1}) + 12[\mathbf{X}^{m+1}-\mathbf{X}^{m-1}] \cdot \mathcal{J}(\mathbf{X}^m) + \\ & 32[2\mathbf{X}^{m+1}-2\mathbf{X}^{m}] \cdot \mathcal{J}(-\frac{1}{8}\mathbf{X}^{m-1} + \frac{3}{4}\mathbf{X}^m + \frac{3}{8}\mathbf{X}^{m+1}) + \\ & 7[3\mathbf{X}^{m+1}-4\mathbf{X}^m + \mathbf{X}^{m-1}] \cdot \mathcal{J}(\mathbf{X}^{m+1})) \mathrm{d} A_0.
		\end{aligned}
		\label{eq:volume-identity-3d}
	\end{equation}
	
	\begin{remark}
		The two half-time levels $m-\frac12$ and $m+\frac12$ arise from Boole's formula and have no analogue in the two-dimensional Simpson identity. In the practical scheme below, the corresponding geometries are not treated as additional unknowns; they are supplied by BGN1 predictor steps, in the spirit of predictor--corrector BGN methods \cite{jiang2025predictor}. The volume conservation mechanism is retained because the normal vectors are defined through the oriented area vectors associated with the quadratic interpolation path.
	\end{remark}
	
	\subsection{Prediction and the second-order volume-preserving scheme}
	
	 Compared with the two-dimensional case, the main additional feature is that Boole's formula involves two half-time geometries. The predictor $\widetilde{S}^{m+1}$ is obtained by one BGN1 step from $S^m$ with time step $\tau$, while the half-time predictor $\widetilde{S}^{m+\frac12}$ is obtained by one BGN1 step from $S^m$ with time step $\tau/2$. For $m\ge2$, the predictor $\widetilde{S}^{m-\frac12}$ has already been generated in the previous step; for the first two-step update, $\widetilde{S}^{\frac12}$ is generated during the initialization by a BGN1 half step from $S^0$. 
	
	More precisely, $\widetilde{S}^{m+1}$ is obtained by finding $\widetilde{\mathbf{X}}^{m+1}\in[\mathcal V^h]^3$ and $\widetilde{H}^{m+1}\in\mathcal V^h$ such that
	\begin{subequations}
		\label{eq:BGN1-predictor-3d-full}
		\begin{align}
			\left(
			\frac{\widetilde{\mathbf{X}}^{m+1}-\mathbf{X}^{m}}{\tau},
			\varphi^h\mathbf{n}^{m}
			\right)^h_{S^{m}}
			+
			\left(
			\nabla_{S^m}\widetilde{H}^{m+1},
			\nabla_{S^m}\varphi^h
			\right)_{S^m}
			&=0,
			\qquad \forall \varphi^h\in\mathcal V^h,
			\\
			\left(
			\widetilde{H}^{m+1},
			\mathbf{n}^{m}\cdot\boldsymbol{\omega}^h
			\right)^h_{S^{m}}
			-
			\left(
			\nabla_{S^m}\widetilde{\mathbf{X}}^{m+1},
			\nabla_{S^m}\boldsymbol{\omega}^h
			\right)_{S^m}
			&=0,
			\qquad \forall \boldsymbol{\omega}^h\in[\mathcal V^h]^3.
		\end{align}
	\end{subequations}
	The half-time predictor $\widetilde{S}^{m+\frac12}$ is defined analogously by replacing $\tau$ in \eqref{eq:BGN1-predictor-3d-full} with $\tau/2$.
	
	Based on the exact identity \eqref{eq:volume-identity-3d} , the weighted structure-preserving normal vectors are defined by
	\begin{align}
		\mathbf{n}^{m+1}
		&:=
		\frac{\mathcal{J}(\mathbf{X}^{m+1})}
		{|\mathcal{J}(\widetilde{\mathbf{X}}^{m+1})|},
		\label{eq:n-mplus-3d}
		\\
		\mathbf{n}^{m+\frac12}
		&:=
		\frac{\mathcal{J}\left(-\frac18\mathbf{X}^{m-1}+\frac34\mathbf{X}^{m}+\frac38\mathbf{X}^{m+1}\right)}
		{|\mathcal{J}(\widetilde{\mathbf{X}}^{m+\frac12})|},
		\label{eq:n-phalf-3d}
		\\
		\mathbf{n}^{m-\frac12}
		&:=
		\frac{\mathcal{J}\left(\frac38\mathbf{X}^{m-1}+\frac34\mathbf{X}^{m}-\frac18\mathbf{X}^{m+1}\right)}
		{|\mathcal{J}(\widetilde{\mathbf{X}}^{m-\frac12})|}.
		\label{eq:n-mhalf-3d}
	\end{align}
	At the known time levels $m$ and $m-1$, we use the usual outward normals $\mathbf{n}^{m}=\mathcal{J}(\mathbf{X}^{m})/|\mathcal{J}(\mathbf{X}^{m})|$ and $\mathbf{n}^{m-1}=\mathcal{J}(\mathbf{X}^{m-1})/|\mathcal{J}(\mathbf{X}^{m-1})|$. The definitions above ensure that integration on the predicted surfaces gives the same oriented volume densities as those in the exact identity \eqref{eq:volume-identity-3d}.
	
	We are now ready to state the practical second-order volume-preserving PFEM. Given $(\mathbf{X}^{m-1},H^{m-1})$ and $(\mathbf{X}^{m},H^{m})$, find
	$(\mathbf{X}^{m+1},H^{m+1})\in[\mathcal V^h]^3\times\mathcal V^h$ such that, for all
	$(\varphi^h,\boldsymbol{\omega}^h)\in\mathcal V^h\times[\mathcal V^h]^3$,
	\begin{subequations}
		\label{eq:scheme-3d}
		\begin{align}
			&
			\frac{7}{90}\left[
			\left(
			\frac{3\mathbf{X}^{m+1}-4\mathbf{X}^{m}+\mathbf{X}^{m-1}}{2\tau},
			\varphi^h\mathbf{n}^{m+1}
			\right)^h_{\widetilde{S}^{m+1}}
			+
			\left(
			\nabla_{\widetilde{S}^{m+1}}H^{m+1},
			\nabla_{\widetilde{S}^{m+1}}\varphi^h
			\right)_{\widetilde{S}^{m+1}}
			\right]
			\nonumber\\
			&\quad
			+\frac{32}{90}\left[
			\left(
			\frac{\mathbf{X}^{m+1}-\mathbf{X}^{m}}{\tau},
			\varphi^h\mathbf{n}^{m+\frac12}
			\right)^h_{\widetilde{S}^{m+\frac12}}
			+
			\left(
			\nabla_{\widetilde{S}^{m+\frac12}}\frac{H^{m+1}+H^{m}}{2},
			\nabla_{\widetilde{S}^{m+\frac12}}\varphi^h
			\right)_{\widetilde{S}^{m+\frac12}}
			\right]
			\nonumber\\
			&\quad
			+\frac{12}{90}\left[
			\left(
			\frac{\mathbf{X}^{m+1}-\mathbf{X}^{m-1}}{2\tau},
			\varphi^h\mathbf{n}^{m}
			\right)^h_{S^{m}}
			+
			\left(
			\nabla_{S^{m}}H^{m},
			\nabla_{S^{m}}\varphi^h
			\right)_{S^{m}}
			\right]
			\nonumber\\
			&\quad
			+\frac{32}{90}\left[
			\left(
			\frac{\mathbf{X}^{m}-\mathbf{X}^{m-1}}{\tau},
			\varphi^h\mathbf{n}^{m-\frac12}
			\right)^h_{\widetilde{S}^{m-\frac12}}
			+
			\left(
			\nabla_{\widetilde{S}^{m-\frac12}}\frac{H^{m}+H^{m-1}}{2},
			\nabla_{\widetilde{S}^{m-\frac12}}\varphi^h
			\right)_{\widetilde{S}^{m-\frac12}}
			\right]
			\nonumber\\
			&\quad
			+\frac{7}{90}\left[
			\left(
			\frac{-\mathbf{X}^{m+1}+4\mathbf{X}^{m}-3\mathbf{X}^{m-1}}{2\tau},
			\varphi^h\mathbf{n}^{m-1}
			\right)^h_{S^{m-1}}
			+
			\left(
			\nabla_{S^{m-1}}H^{m-1},
			\nabla_{S^{m-1}}\varphi^h
			\right)_{S^{m-1}}
			\right]
			=0,
			\quad \forall \varphi^h\in\mathcal V^h,
			\label{eq:scheme-3d-a}
			\\
			&
			\left(
			H^{m+1},
			\widetilde{\mathbf{n}}^{m+1}\cdot\boldsymbol{\omega}^h
			\right)^h_{\widetilde{S}^{m+1}}
			-
			\left(
			\nabla_{\widetilde{S}^{m+1}}\mathbf{X}^{m+1},
			\nabla_{\widetilde{S}^{m+1}}\boldsymbol{\omega}^h
			\right)_{\widetilde{S}^{m+1}}
			=0,
			\quad \forall \boldsymbol{\omega}^h\in[\mathcal V^h]^3.
			\label{eq:scheme-3d-b}
		\end{align}
	\end{subequations}
	We refer to \eqref{eq:scheme-3d} as the BGN/VC scheme, where VC stands for volume conserving. In \eqref{eq:scheme-3d}, $\widetilde{\mathbf{n}}^{m+1}$ is the usual outward normal of the predicted surface $\widetilde{S}^{m+1}$ and is used only in the curvature equation. The nonlinear system is solved by a Newton-type method analogous to that in Section~\ref{subsec:solver-2d}; the only additional ingredient is the linearization of the oriented area vector $\mathcal{J}(\mathbf{X})$. The details are given in Appendix~\ref{app:solver-3d}.
	
	\begin{theorem}[Exact volume conservation]
		\label{thm:volume-conservation-3d}
		Let $(\mathbf{X}^{m+1},H^{m+1})$ be a solution of \eqref{eq:scheme-3d}. Then $V^{m+1}=V^{m-1}$. If the first step is computed by the first-order structure-preserving PFEM so that $V^1=V^0$, then $V^m=V^0$ for all $m\ge0$.
	\end{theorem}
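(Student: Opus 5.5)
The plan mirrors the proof of Theorem~\ref{thm:area-conservation-2d}. I would test \eqref{eq:scheme-3d-a} with the constant function $\varphi^h\equiv 2\tau$, which lies in $\mathcal V^h$. Since $\nabla_{S_{\mathbf Z}}\varphi^h=0$ on every triangle of every surface $S_{\mathbf Z}$ (apply \eqref{eq:surface-gradient-3d} with $f_{j1}=f_{j2}=f_{j3}$; the bracket sums to zero), all five stiffness terms vanish. This holds regardless of which (predicted) surface they are assembled on and regardless of the curvature values $H^{m\pm1},H^m$. What remains is a weighted sum of five mass-lumped normal-velocity terms, and the goal is to identify it with the right-hand side of the Boole identity \eqref{eq:volume-identity-3d}.

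The key step is to rewrite each mass-lumped term as an integral over $\widehat S_h$ against the oriented area vector. For a nodal map $\mathbf Z$, a piecewise linear $\mathbf U$ and a piecewise constant vector field $\mathbf m$, definition \eqref{eq:mass-lumped-3d} gives
\[
(\mathbf U,\mathbf m)^h_{S_{\mathbf Z}}=\sum_j \tfrac13|\sigma_j(\mathbf Z)|\,\mathbf m_j\cdot\textstyle\sum_k\mathbf U(\widehat{\mathbf q}_{jk}).
\]
If $\mathbf m_j=\mathcal J_j(\mathbf Y)/|\mathcal J_j(\mathbf Z)|$, the metric factor cancels up to the constant $\tfrac12$, leaving $\tfrac16\sum_j\mathcal J_j(\mathbf Y)\cdot\sum_k\mathbf U(\widehat{\mathbf q}_{jk})$. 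This quantity is independent of $\mathbf Z$, and it equals (up to a fixed normalization constant common to all five terms) the exact elementwise integral $\int_{\widehat\sigma_j}\mathbf U\cdot\mathcal J_j(\mathbf Y)\,dA_0$, because the integral of a linear function over a triangle is its vertex average. The normals are chosen precisely so this applies: \eqref{eq:n-mplus-3d}--\eqref{eq:n-mhalf-3d} on $\widetilde S^{m+1},\widetilde S^{m\pm\frac12}$, and the true normals on $S^m,S^{m-1}$ with $\mathbf Z=\mathbf Y$. Each term therefore becomes the corresponding Boole integrand evaluated with $\mathcal J(\mathbf X^{m-1})$, $\mathcal J(\mathbf X^{m-\frac12})$, $\mathcal J(\mathbf X^m)$, $\mathcal J(\mathbf X^{m+\frac12})$, $\mathcal J(\mathbf X^{m+1})$, where the half-level positions are given by \eqref{eq:Y-half}.

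Next I would match coefficients. After multiplication by $2\tau$, the velocity factors become $3\mathbf X^{m+1}-4\mathbf X^m+\mathbf X^{m-1}$, $2(\mathbf X^{m+1}-\mathbf X^m)$, $\mathbf X^{m+1}-\mathbf X^{m-1}$, $2(\mathbf X^m-\mathbf X^{m-1})$ and $-\mathbf X^{m+1}+4\mathbf X^m-3\mathbf X^{m-1}$. These are exactly $\partial_\alpha\mathbf X^h$ at $\alpha=1,\tfrac34,\tfrac12,\tfrac14,0$, which I would check against \eqref{eq:D-mminus}--\eqref{eq:D-mplus} and \eqref{eq:D-3d-mhalf}--\eqref{eq:D-3d-phalf}. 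Together with the weights $7,32,12,32,7$ over $90$, the tested equation is therefore a fixed nonzero multiple of the right-hand side of \eqref{eq:volume-identity-3d}, and it is zero.

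To conclude, I need \eqref{eq:volume-identity-3d} for the discrete volume \eqref{eq:discrete-area-volume-3d}. This is the main obstacle, and I would make it explicit. I would verify that $\frac{d}{d\alpha}V(\mathbf X^h(\alpha))=c\int_{\widehat S_h}\partial_\alpha\mathbf X^h\cdot\mathcal J(\mathbf X^h)\,dA_0$ with the same normalization constant $c$ used above. The approach is to write $V$ as a sum of signed tetrahedra $\frac16\mathbf z_{j1}\cdot(\mathbf z_{j2}\times\mathbf z_{j3})$, which agrees with the $\frac1{18}$ formula since $\mathbf z_{jk}\cdot\mathcal J_j$ is independent of $k$. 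Differentiating and using closedness of the mesh, so that contributions of shared edges cancel, gives $\frac16\sum_j\mathcal J_j\cdot\sum_k\dot{\mathbf z}_{jk}$. The integrand is a polynomial in $\alpha$ of degree at most $1+4=5$, so Boole's rule integrates it exactly over $[0,1]$. Hence $V^{m+1}-V^{m-1}=0$, and induction from $V^1=V^0$ gives $V^m=V^0$ for all $m$.
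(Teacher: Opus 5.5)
Your proposal is correct and follows the paper's proof: test \eqref{eq:scheme-3d-a} with $\varphi^h\equiv 2\tau$, use the weighted normals \eqref{eq:n-mplus-3d}--\eqref{eq:n-mhalf-3d} to cancel the metric factors of the predicted surfaces, recognize the Boole sum \eqref{eq:volume-identity-3d}, and induct from $V^1=V^0$. Your signed-tetrahedron and edge-cancellation derivation of $\frac{d}{d\alpha}V=\frac16\sum_j\mathcal J_j\cdot\sum_k\dot{\mathbf z}_{jk}$ supplies a detail the paper only asserts.

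One small correction concerns the constant. The element contribution $\frac16\mathcal J_j\cdot\sum_k\mathbf U(\widehat{\mathbf q}_{jk})$ equals $\frac{1}{2|\widehat\sigma_j|}\int_{\widehat\sigma_j}\mathbf U\cdot\mathcal J_j\,dA_0$, and this factor depends on the element. So on a nonuniform reference mesh there is no single global constant $c$. You should instead compare the element sums $\frac16\sum_j(\cdot)$ directly, which is exactly what both your mass-lumping computation and your volume derivative already produce.
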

	
	\begin{proof}
		Taking $\varphi^h\equiv2\tau$ in \eqref{eq:scheme-3d-a}, all surface-gradient terms vanish. The remaining five normal velocity terms are exactly the Boole representation of the volume difference in \eqref{eq:volume-identity-3d}; the only point to note is that the definitions \eqref{eq:n-mplus-3d}--\eqref{eq:n-mhalf-3d} convert the area factors of the predicted surfaces into the oriented area vectors appearing in \eqref{eq:volume-identity-3d}. Hence $V^{m+1}-V^{m-1}=0$. The full conservation follows by induction from $V^1=V^0$.
	\end{proof}
	
	\begin{remark}
		The five terms in \eqref{eq:scheme-3d-a} correspond exactly to the five Boole quadrature points. The endpoint velocity approximations are BDF2-type, the center term is of CN-leapfrog type, and the two half-time terms are predictor-corrector-type. Thus the scheme keeps the predictor-based implementation of existing second-order BGN methods while replacing the normal densities by the structure-preserving ones dictated by the exact volume identity.
	\end{remark}
	
	\section{Numerical results}
	\label{sec:numerics}
	
	In this section, we present numerical experiments to examine the accuracy, conservation properties, geometric behavior, and computational performance of the proposed second-order schemes. Both the curve case in two dimensions and the surface case in three dimensions are considered. The nonlinear systems are solved by Newton's method with stopping tolerance $\mathrm{tol}=5\times10^{-14}$.
	
	After the initial polygonal curve or polyhedral surface represented by $\mathbf{X}^0$ has been generated, the initial curvature variable, namely $\kappa^0$ in 2D or $H^0$ in 3D, is computed by the least-squares procedure described in \cite{jiang2024second}.
	
	
	For the convergence tests of closed curves, we compare numerical solutions obtained on two successive refinements. The discrepancy between two curves is measured by the manifold distance. More precisely, for two closed curves $\Gamma_1$ and $\Gamma_2$ enclosing regions $\Omega_1$ and $\Omega_2$, respectively, we define
	\[
	M(\Gamma_1,\Gamma_2)
	:=
	|\Omega_1\triangle\Omega_2|
	=
	|(\Omega_1\setminus\Omega_2)\cup(\Omega_2\setminus\Omega_1)|.
	\]
	
	For 2D computations, we monitor the relative area loss and the mesh ratio:
	\[
	\Delta A(t)|_{t=t_m}:= \frac{A^m-A^0}{A^0},\qquad
	\Psi(t)|_{t=t_m}:=\frac{\max_{1\le j\le N}|\mathbf{h}_j^m|}
	{\min_{1\le j\le N}|\mathbf{h}_j^m|}.
	\]
	Here $A^m$ is the enclosed area, $\mathbf{h}_j^m$ denotes the $j$-th edge of the polygonal curve $\Gamma^m$.
	
	For 3D computations, we monitor the relative volume loss and the triangle-area ratio:
	\[
	\Delta V(t)|_{t=t_m}:= \frac{V^m-V^0}{V^0},\qquad
	r_a(t)|_{t=t_m}
	:=
	\frac{\max_j |\sigma_j^m|}{\min_j |\sigma_j^m|}.
	\]

	\subsection{Curve experiments}

We first examine the proposed BGN/AC scheme for the surface diffusion of closed planar curves. Three representative initial curves are considered.

\paragraph{Ellipse.}
\[
x=2\cos\theta,\qquad y=\sin\theta,\qquad 0\le\theta<2\pi.
\]

\paragraph{Flower curve.}
\[
x=(2+\cos(6\theta))\cos\theta,\qquad
y=(2+\cos(6\theta))\sin\theta,\qquad 0\le\theta<2\pi.
\]

\paragraph{Peanut-shaped curve.}
\[
x=(1+0.45\cos(2\theta))\cos\theta,\qquad
y=(1+0.45\cos(2\theta))\sin\theta,\qquad 0\le\theta<2\pi.
\]

The ellipse is used for the convergence study and for comparison with existing second-order BGN-based schemes. The flower and peanut-shaped curves are used to test the behavior of the method for geometries with stronger curvature variation and less uniform mesh distributions.

We begin with a convergence test for the $2\times1$ ellipse. Since an exact solution is not available, the error is evaluated by comparing two numerical solutions on successive refinements along the path $\tau=0.1h$. Specifically, for two successive time step sizes $\tau_1$ and $\tau_2$, we define
\[
\mathcal{E}_{M}(T,\tau_1,\tau_2)
:=
M\!\left(\Gamma^{T/\tau_1}_{h_1,\tau_1},\Gamma^{T/\tau_2}_{h_2,\tau_2}\right),
\]
and compute the observed order by
\[
\mathrm{Order}
:=
\log\!\left(
\frac{\mathcal{E}_{M}(T,\tau_1,\tau_2)}
{\mathcal{E}_{M}(T,\tau_2,\tau_3)}
\right)
\Big/
\log\!\left(\frac{\tau_1}{\tau_2}\right).
\]
The results are reported in Table~\ref{tab:2d-convergence}. The observed rates are close to two at all listed times, confirming the expected second-order behavior.

\begin{table}[H]
	\centering
	\renewcommand{\arraystretch}{1.2}
	\caption{Cauchy-type convergence test for the proposed BGN/AC scheme applied to the surface diffusion of a $2\times1$ ellipse. The refinement path is chosen as $\tau=0.1h$.}
	\label{tab:2d-convergence}
	\begin{tabular}{c|cc|cc|cc}
		\hline
		$\tau$
		& $\mathcal{E}_{M}(0.05)$ & order
		& $\mathcal{E}_{M}(0.5)$ & order
		& $\mathcal{E}_{M}(2)$ & order\\
		\hline
		$1/160$  & $1.40\times10^{-1}$ & --   & $1.39\times10^{-1}$ & --   & $1.39\times10^{-1}$ & --   \\
		$1/320$  & $3.60\times10^{-2}$ & $1.96$ & $3.57\times10^{-2}$ & $1.96$ & $3.57\times10^{-2}$ & $1.96$ \\
		$1/640$  & $9.05\times10^{-3}$ & $1.99$ & $9.05\times10^{-3}$ & $1.98$ & $9.00\times10^{-3}$ & $1.99$ \\
		$1/1280$ & $2.27\times10^{-3}$ & $2.00$ & $2.29\times10^{-3}$ & $1.98$ & $2.27\times10^{-3}$ & $1.99$ \\
		$1/2560$ & $5.67\times10^{-4}$ & $2.00$ & $5.75\times10^{-4}$ & $2.00$ & $5.68\times10^{-4}$ & $2.00$ \\
		\hline
	\end{tabular}
\end{table}

We next compare the proposed BGN/AC scheme with three existing second-order BGN-based methods: the BGN/CNLF scheme, the BGN/BDF2 scheme, and the BGN/PC scheme. The initial curve is again the $2\times1$ ellipse, with $
N=128, \tau=10^{-3}, T=0.5.$
The results are shown in Fig.~\ref{fig:ellipse-2d-comparison}. All four schemes decrease the perimeter during the evolution; see Fig.~\ref{fig:ellipse-2d-comparison}(a). The difference lies in the area behavior. As shown in Fig.~\ref{fig:ellipse-2d-comparison}(b), the proposed BGN/AC scheme preserves the enclosed area up to the nonlinear solver tolerance, whereas the other three second-order BGN-based schemes exhibit area drift of order $10^{-5}$. Fig.~\ref{fig:ellipse-2d-comparison}(c) further shows that the mesh ratio of the proposed method remains well controlled and is comparable to that of the other BGN-based schemes. This example indicates that the conservative modification enforces exact area preservation without sacrificing the mesh quality typical of BGN-type discretizations.

\begin{figure}[htbp]
	\centering
	\includegraphics[width=1\linewidth]{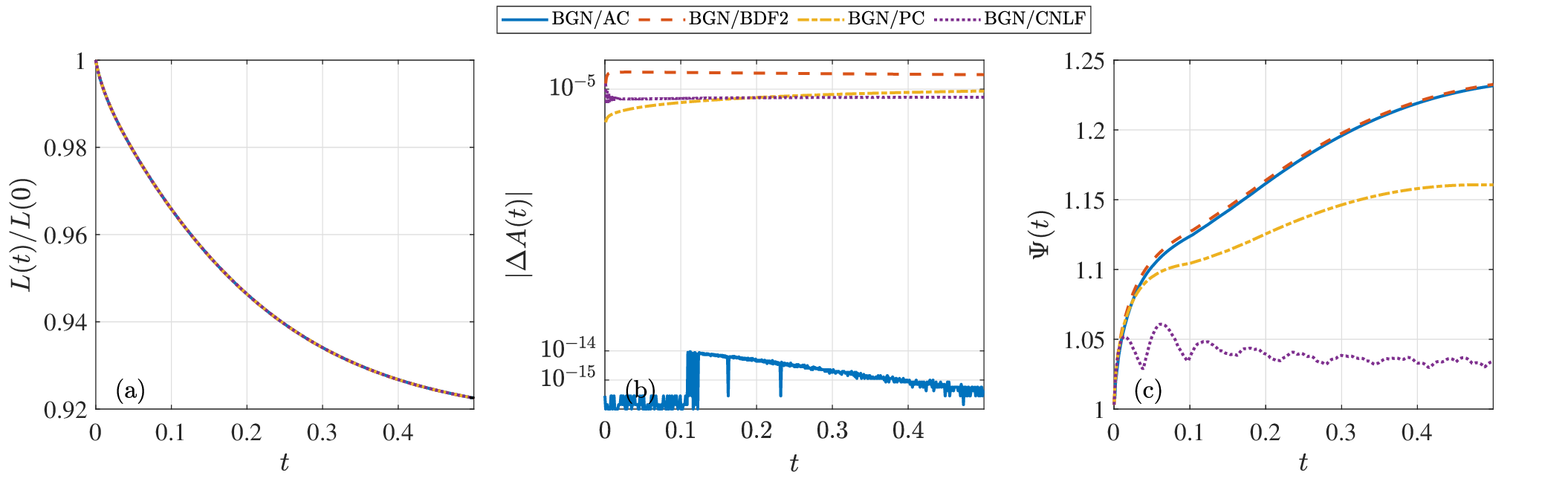}
	\caption{Comparison of four second-order BGN-based schemes for the surface diffusion of a $2\times1$ ellipse with $N=128$, $\tau=1/1000$, and $T=0.5$: (a) normalized perimeter $L(t)/L(0)$; (b) absolute relative area loss $|\Delta A(t)|$; (c) mesh ratio function $\Psi(t)$. The four curves correspond to the proposed BGN/AC scheme, the BGN/CNLF scheme, the BGN/BDF2 scheme, and the BGN/PC scheme, respectively.}
	\label{fig:ellipse-2d-comparison}
\end{figure}

We further test the method on the flower curve and the peanut-shaped curve. These two examples are more demanding than the ellipse because of their stronger curvature variation and nonuniform initial point distribution. For each initial curve, we use two time step sizes, $\tau=1/500$ and $\tau=1/1000$, to examine the sensitivity of the geometric diagnostics to temporal refinement. As shown in Fig.~\ref{fig:flower-peanut-2d}(a, e), both curves relax toward a circular equilibrium shape during the evolution. The relative area loss in Fig.~\ref{fig:flower-peanut-2d}(b, f) remains at the level of the nonlinear solver tolerance for both time step sizes, confirming the exact area-conservation property of the proposed scheme in these more challenging tests. The Newton iteration numbers in Fig.~\ref{fig:flower-peanut-2d}(c, g) stay moderate and decrease after the initial transient. Fig.~\ref{fig:flower-peanut-2d}(d, h) shows that the mesh ratio remains bounded and approaches one as the evolution proceeds; moreover, the smaller time step generally leads to a faster approach to the equidistributed state. These observations indicate that the proposed BGN/AC scheme remains robust for nonconvex or strongly nonuniform planar geometries.

\begin{figure}[htbp]
	\centering
	\includegraphics[width=1\linewidth]{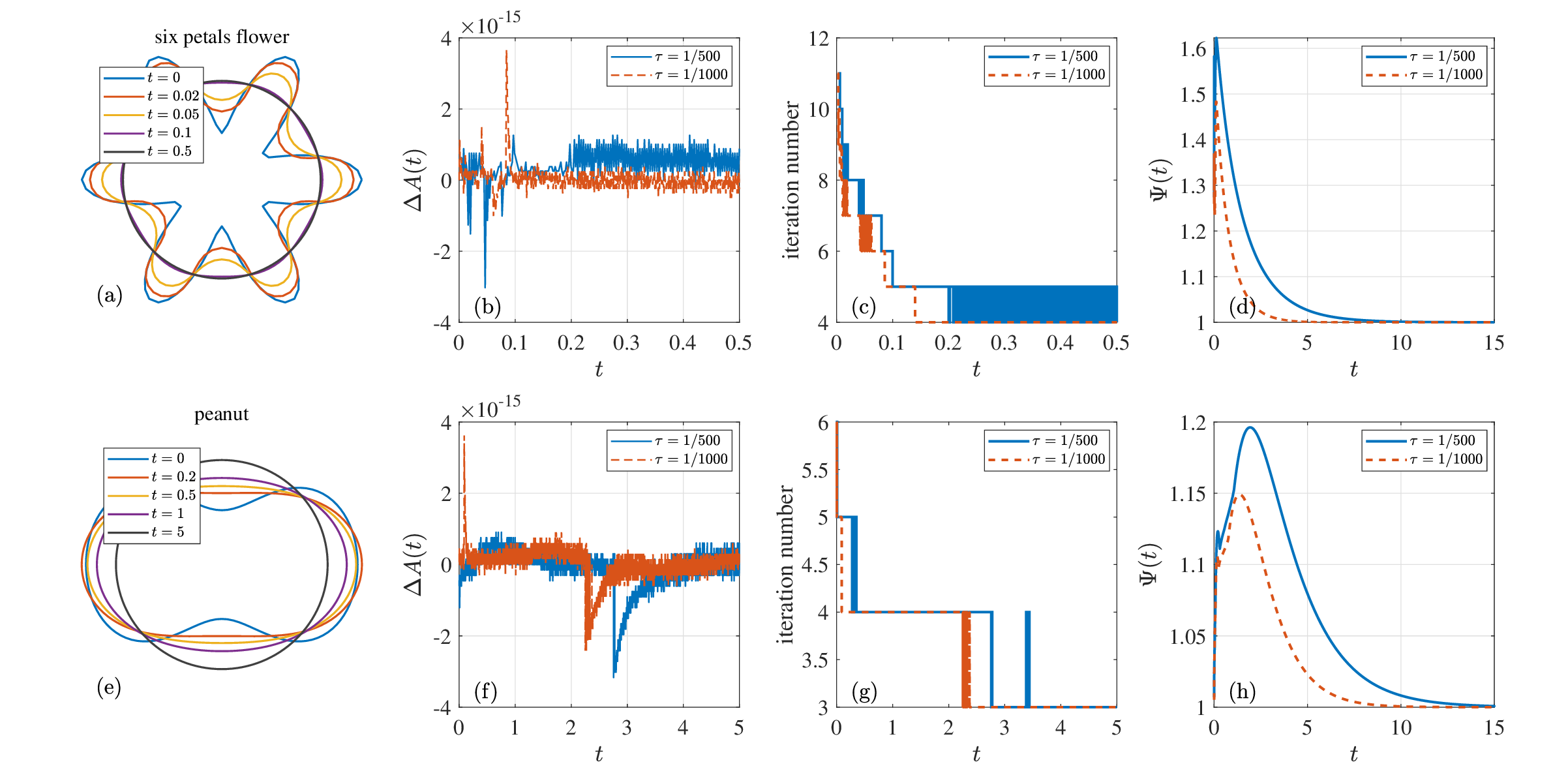}
	\caption{Geometric diagnostics produced by the proposed BGN/AC scheme for the six-petal flower curve (top row) and the peanut-shaped curve (bottom row) with $N=128$. For each initial curve, the diagnostic quantities in the last three columns are plotted for two time step sizes, $\tau=1/500$ and $\tau=1/1000$. (a, e) Snapshots of the evolution; (b, f) relative area loss $\Delta A(t)$; (c, g) Newton iteration number; (d, h) mesh ratio $\Psi(t)$.}
	\label{fig:flower-peanut-2d}
\end{figure}

\subsection{Surface experiments}

We now turn to the proposed BGN/VC scheme for closed surfaces in three dimensions. Two initial surfaces are considered. Throughout this subsection, $K$ and $J$ denote the numbers of vertices and triangular faces of the polyhedral surface, respectively.

\paragraph{Ellipsoid.}
\[
\frac{x^2}{4}+\frac{y^2}{(1.5)^2}+z^2=1.
\]
This $(2,1.5,1)$ ellipsoid is used for the convergence study and for verifying volume conservation, mesh behavior, and nonlinear solver performance.

\paragraph{Cigar-like surface.}
The cigar-like surface consists of a cylindrical part of radius $1/2$ and length $7$, capped by two hemispheres of radius $1/2$, so that the total length is $8$. This example is used to test the behavior of the method near a pinch-off event.
\\

We first study convergence for the $(2,1.5,1)$ ellipsoid. The initial polyhedral surfaces are generated by the distance-function mesh generator \texttt{DistMesh} \cite{persson2005mesh} with a uniform size function. The input parameter $h$ controls the spacing of the background Cartesian grid used in the isosurface extraction. Since the resulting meshes are nearly isotropic, the actual element diameters are proportional to $h$, and we use this parameter to label the refinement path. The four values $h=1/5,1/10,1/15,1/20$ yield triangulations with $(K,J)=(1028,2052)$, $(3990,7976)$, $(9196,18388)$, and $(16230,32456)$, respectively. The time step is chosen as $\tau=0.01h$.

To quantify the difference between two polyhedral surfaces $S_1$ and $S_2$, we use a symmetric vertex-to-surface distance, as in \cite{bao2021structure}, defined by
\[
M(S_1,S_2) := \frac{1}{2}\left(
\max_{\mathbf{q}_1 \in \mathcal{N}_1} \min_{\mathbf{p}_2 \in S_2} |\mathbf{q}_1 - \mathbf{p}_2|
+
\max_{\mathbf{q}_2 \in \mathcal{N}_2} \min_{\mathbf{p}_1 \in S_1} |\mathbf{q}_2 - \mathbf{p}_1|
\right),
\]
where $\mathcal{N}_i$ denotes the set of vertices of $S_i$. As in the curve case, the convergence error is computed by comparing numerical solutions on two successive refinements along the path $\tau=0.01h$. For two successive numerical solutions, we define
\[
\mathcal{E}^{3D}_{M}(T,\tau_1, \tau_2)
:=
M\!\left(S^{T/{\tau_1}}_{h_1,\tau_1},\, S^{T/{\tau_2}}_{h_2,\tau_{2}}\right).
\]
The errors are measured at $T=0.1,0.2,0.3$. The results in Table~\ref{tab:3d-convergence} show rates close to two, which supports the expected second-order accuracy of the fully discrete BGN/VC scheme.
\begin{table}[H]
	\centering
	\renewcommand{\arraystretch}{1.2}
	\caption{Cauchy-type convergence test for the proposed BGN/VC scheme applied to the surface diffusion of the $(2,1.5,1)$ ellipsoid. Here $\tau_0=1/500$ corresponds to the coarsest mesh with $h_0=1/5$.}
	\label{tab:3d-convergence}
	\begin{tabular}{c|cc|cc|cc}
		\hline
		$\tau$
		& $\mathcal{E}^{3D}_{M}(0.1)$ & order
		& $\mathcal{E}^{3D}_{M}(0.2)$ & order
		& $\mathcal{E}^{3D}_{M}(0.3)$ & order\\
		\hline
		$\tau_0$     & $4.35\times10^{-3}$ & -- 
		& $3.86\times10^{-3}$ & -- 
		& $3.54\times10^{-3}$ & -- \\
		$\tau_0/2$   & $1.26\times10^{-3}$ & $1.78$
		& $1.12\times10^{-3}$ & $1.79$
		& $1.04\times10^{-3}$ & $1.76$ \\
		$\tau_0/3$   & $5.94\times10^{-4}$ & $1.86$
		& $5.44\times10^{-4}$ & $1.78$
		& $4.99\times10^{-4}$ & $1.82$ \\
		\hline
	\end{tabular}
\end{table}

We next examine the geometric behavior of the proposed method for the same ellipsoidal surface. The computational parameters are
$
(K,J)=(3990,7976), \tau=0.001.$
The results are shown in Fig.~\ref{fig:ellipsoid-3d-diagnostics}. The relative volume loss remains at the level of the nonlinear solver tolerance throughout the computation, in agreement with Theorem~\ref{thm:volume-conservation-3d}; see Fig.~\ref{fig:ellipsoid-3d-diagnostics}(a). Fig.~\ref{fig:ellipsoid-3d-diagnostics}(b) shows that the triangle-area ratio remains well controlled during the evolution, indicating that the mesh quality does not deteriorate. The Newton iteration number remains low and stable in Fig.~\ref{fig:ellipsoid-3d-diagnostics}(c), which suggests that the nonlinear systems can be solved efficiently.

\begin{figure}[htbp]
	\centering
	\includegraphics[width=1\linewidth]{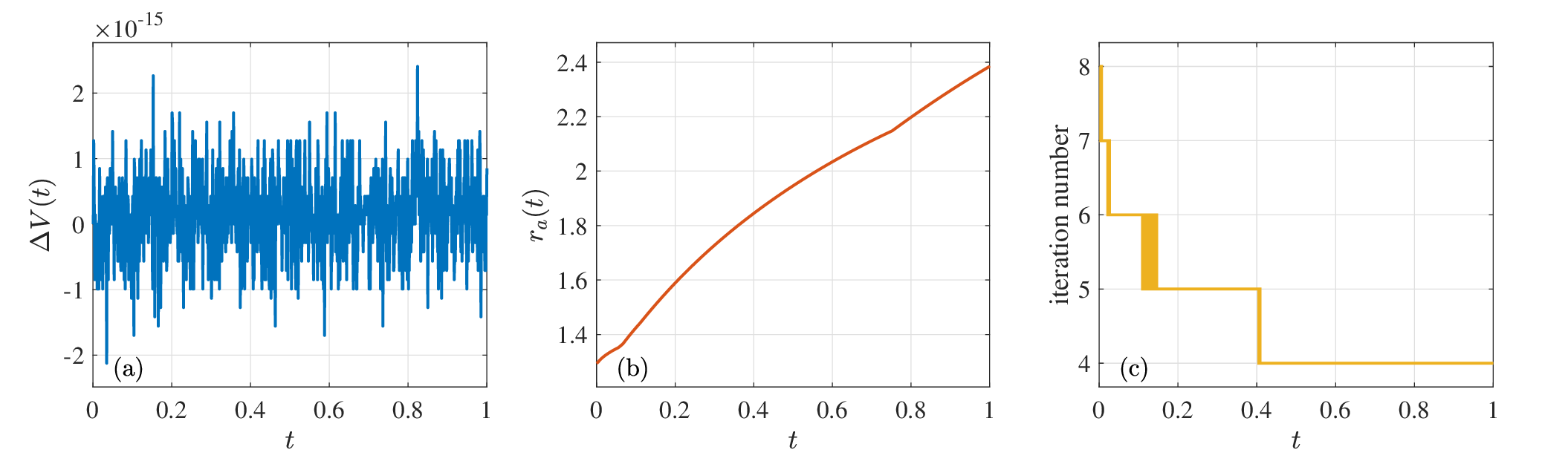}
	\caption{Geometric diagnostics for the proposed BGN/VC scheme applied to the surface diffusion of the $(2,1.5,1)$ ellipsoid with $(K,J)=(3990,7976)$ and $\tau=0.001$: (a) relative volume loss $\Delta V(t)$; (b) triangle area ratio $r_a$; (c) Newton iteration number.}
	\label{fig:ellipsoid-3d-diagnostics}
\end{figure}

We finally consider the cigar-like surface. This example is more challenging because the evolution develops a necking instability and approaches pinch-off. We compare the proposed BGN/VC scheme with the BGN/BDF2 scheme and the BGN/PC scheme. The BGN/CNLF scheme is not included here, since its mesh deterioration for this three-dimensional near-pinch-off example has already been reported in the predictor--corrector BGN study. The discretization parameters are
$
(K,J)=(1322,2640), \tau=0.001.
$
Representative snapshots of the evolving surfaces are shown in Fig.~\ref{fig:cigar-snapshots}. All three methods capture the same qualitative evolution: the initially elongated surface forms a thin neck, the neck radius decreases rapidly, and the computation terminates near pinch-off. The resulting pinch-off times are close, although not identical, for the three methods.

\begin{figure}[htbp]
	\centering
	\includegraphics[width=1\linewidth]{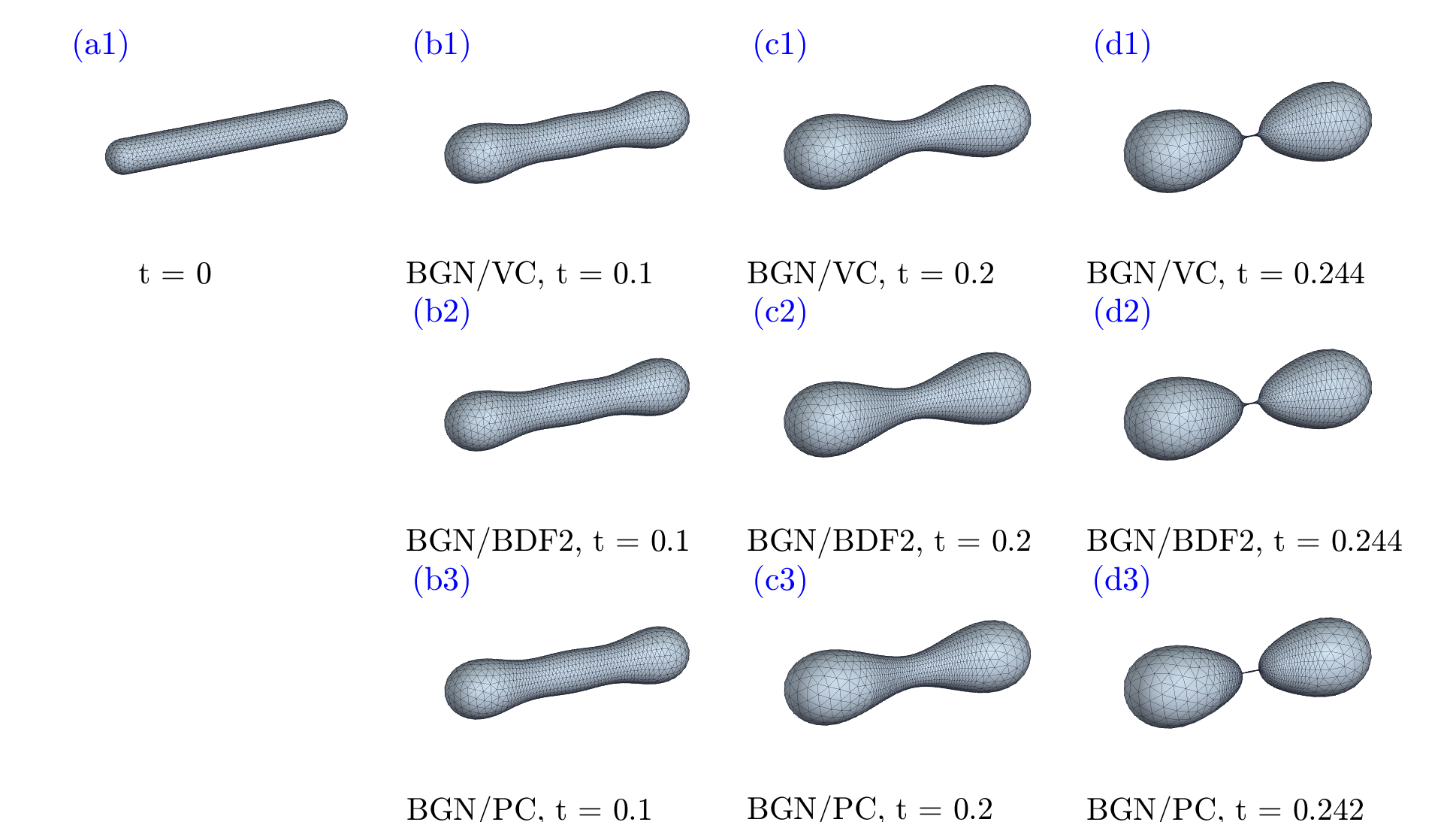}
	\caption{Comparison of surface evolution for the cigar-like initial surface with $(K,J)=(1322,2640)$ and $\tau=0.001$. The three rows correspond to the proposed BGN/VC scheme, the BGN/BDF2 scheme, and the BGN/PC scheme, respectively. The first three columns show representative snapshots during the evolution, while the last column shows the last computed surface before pinch-off for each method.}
	\label{fig:cigar-snapshots}
\end{figure}

The corresponding geometric diagnostics are reported in Fig.~\ref{fig:cigar-diagnostics}. Fig.~\ref{fig:cigar-diagnostics}(a) shows that all three methods produce a monotone decay of the normalized surface area. The zoom-in view near the pinch-off regime further indicates that the proposed BGN/VC scheme follows the BGN/BDF2 scheme very closely, while the BGN/PC scheme shows a slightly different decay profile near the pinch-off regime. The same tendency is more visible in the mesh-quality indicator shown in Fig.~\ref{fig:cigar-diagnostics}(c): the triangle-area ratio generated by BGN/VC remains close to that of BGN/BDF2 and is noticeably smaller than that of BGN/PC as the surface approaches pinch-off. 

The main difference among the three schemes lies in the volume behavior. As shown in Fig.~\ref{fig:cigar-diagnostics}(b), the proposed BGN/VC scheme keeps the absolute relative volume loss at the level of the nonlinear solver tolerance, whereas the BGN/BDF2 and BGN/PC schemes exhibit visible volume drift of order $10^{-5}$. These results indicate that the proposed volume-preserving modification retains the near-pinch-off dynamics and mesh behavior of the BGN/BDF2 scheme, while additionally enforcing exact volume conservation.

\begin{figure}[htbp]
	\centering
	\includegraphics[width=1\linewidth]{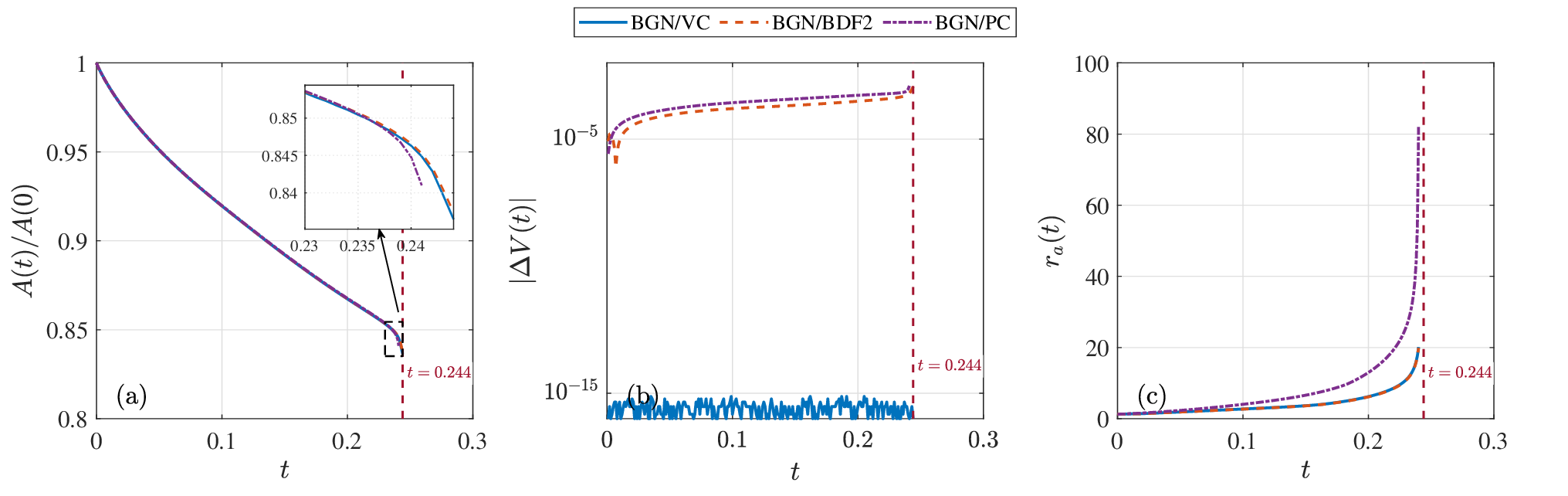}
	\caption{Comparison of geometric quantities for the cigar-like surface with $(K,J)=(1322,2640)$ and $\tau=0.001$: (a) normalized surface area $A(t)/A(0)$, together with a zoom-in view near the pinch-off regime; (b) absolute relative volume loss $|\Delta V(t)|$; (c) triangle-area ratio $r_a(t)$. The three curves correspond to the proposed BGN/VC scheme, the BGN/BDF2 scheme, and the BGN/PC scheme, respectively.}
	\label{fig:cigar-diagnostics}
\end{figure}
	
	\section{Conclusions}
	
	We have proposed a geometric-interpolation approach for constructing second-order area- and volume-preserving PFEMs for surface diffusion. The main point of the construction is that the conservative normal velocity is not imposed by an auxiliary constraint equation, but is obtained from the exact variation of the enclosed area or volume along a quadratic temporal interpolation path. This leads to a Simpson-type identity for closed curves and a Boole-type identity for closed surfaces.
	
	The resulting BGN/AC and BGN/VC schemes preserve the enclosed area or volume up to the nonlinear solver tolerance. At the same time, they can be implemented on BGN-predicted auxiliary geometries and therefore remain close to existing second-order BGN-based discretizations in terms of mesh behavior and practical performance. The numerical experiments confirm the expected second-order convergence, exact conservation, stable Newton iteration, and good mesh quality for both planar curves and three-dimensional surfaces, including near-pinch-off surface diffusion dynamics.
	
	The present work focuses on the conservation mechanism. As discussed above, a Lagrange-multiplier energy equation can be incorporated if one also wants to enforce a discrete perimeter or surface-area dissipation law. Other natural extensions include higher-order geometric interpolation formulas, anisotropic surface diffusion, and more general geometric flows with multiple constraints.

	\appendix

	\section{Finite difference coefficients on equidistant nodes}
	\label{app:fd-coeff}
	This appendix records a general formula for the finite-difference coefficients obtained by differentiating the polynomial interpolant on equidistant temporal nodes. The derivative approximations used in \eqref{eq:D-mminus}--\eqref{eq:D-mplus} are precisely the quadratic ($k=2$) special cases of this result. We include this statement both to place the three-level formulas used in the main text into a unified interpolation framework and to indicate how similar coefficients arise in possible higher-order extensions.
	\begin{theorem}[Finite difference coefficients]
		Let $x_j=j/k$, $j=0,\ldots,k$, and let $l_j(x)$ be the associated Lagrange basis. For each $n=0,\ldots,k$, the unique coefficients $b_{n,j}$ satisfying
		\[
		f'(x_n)=\sum_{j=0}^{k} b_{n,j}f(x_j),
		\qquad \forall f\in\mathbb{P}_k,
		\]
		are $b_{n,j}=l_j'(x_n)$ and are explicitly given by
		\[
		b_{n,j}=
		\begin{cases}
			\displaystyle k\left(H_n-H_{k-n}\right),& j=n,\\[1ex]
			\displaystyle k\,\frac{(-1)^{n-j}}{n-j}\frac{\binom{k}{j}}{\binom{k}{n}},& j\ne n,
		\end{cases}
		\]
		where $H_p=\sum_{i=1}^p i^{-1}$ and $H_0=0$. If $x=(t-t_0)/(k\tau)$, then
		\[
		\partial_t f(t_n)
		=
		\frac{1}{k\tau}
		\sum_{j=0}^{k}b_{n,j}f(t_j).
		\]
	\end{theorem}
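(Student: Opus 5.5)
Uniqueness and the identification $b_{n,j}=l_j'(x_n)$ come first. The map $f\mapsto (f(x_0),\dots,f(x_k))$ is a bijection from $\mathbb{P}_k$ onto $\mathbb{R}^{k+1}$, so the functional $f\mapsto f'(x_n)$ can be written in exactly one way as a linear combination of nodal values. Writing $f=\sum_j f(x_j)l_j$ and differentiating at $x_n$ shows that this combination uses $b_{n,j}=l_j'(x_n)$. Testing the identity with $f=l_i$ recovers the same coefficients, which confirms uniqueness directly.

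Next I compute $l_j'(x_n)$ via the nodal polynomial $\omega(x)=\prod_{i=0}^k(x-x_i)$. Since $l_j(x)=\omega(x)/\bigl((x-x_j)\omega'(x_j)\bigr)$, there are two cases.

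\emph{Case $j\ne n$.} Here $\omega(x_n)=0$, so
\[
l_j'(x_n)=\frac{\omega'(x_n)}{(x_n-x_j)\,\omega'(x_j)}.
\]
With $x_i=i/k$,
\[
\omega'(x_j)=\prod_{i\ne j}\frac{j-i}{k}=k^{-k}(-1)^{k-j}\,j!\,(k-j)!.
\]
Therefore
\[
\frac{\omega'(x_n)}{\omega'(x_j)}=(-1)^{j-n}\frac{n!(k-n)!}{j!(k-j)!}=(-1)^{n-j}\frac{\binom{k}{j}}{\binom{k}{n}},
\]
and dividing by $x_n-x_j=(n-j)/k$ gives the stated off-diagonal formula.

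\emph{Case $j=n$.} Use logarithmic differentiation of $l_n(x)=\prod_{i\ne n}(x-x_i)/(x_n-x_i)$, together with $l_n(x_n)=1$:
\[
l_n'(x_n)=\sum_{i\ne n}\frac{1}{x_n-x_i}=k\sum_{i\ne n}\frac{1}{n-i}.
\]
Split the sum into $i<n$, which contributes $H_n$, and $i>n$, which contributes $-H_{k-n}$. This gives $k(H_n-H_{k-n})$. The only care needed here is the sign bookkeeping in $\omega'(x_j)$; nothing in this step is hard.

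Finally, the time-scaled version follows from the chain rule. With $x=(t-t_0)/(k\tau)$ we have $dx/dt=1/(k\tau)$, and the nodes $t_j=t_0+j\tau$ correspond to $x_j$. Applying the first part to $g(x)=f(t_0+k\tau x)\in\mathbb{P}_k$ gives $\partial_t f(t_n)=\frac{1}{k\tau}\sum_j b_{n,j}f(t_j)$. As a consistency check, $k=2$ reproduces \eqref{eq:D-mminus}--\eqref{eq:D-mplus}; for example $b_{0,\cdot}=(-3,4,-1)$.
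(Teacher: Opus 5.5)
Your proof is correct and follows essentially the same route as the paper: differentiate the Lagrange interpolant to get $b_{n,j}=l_j'(x_n)$, then evaluate from the product formula, with the diagonal entry as a sum of reciprocals. The differences are that you handle the off-diagonal case through the nodal polynomial $\omega$ rather than by rescaling to integer nodes, and you also justify uniqueness and the chain-rule step for the $t$-scaled formula, both of which the paper leaves implicit. Your $\omega$-based sign bookkeeping is also consistent throughout, whereas the paper's intermediate expression $\ell_j'(n)=\frac{1}{n-j}\prod_{m\ne n,j}\frac{n-m}{j-m}$ should carry the prefactor $\frac{1}{j-n}$. For example, $k=2$, $n=0$, $j=1$ gives $-2$ instead of $\ell_1'(0)=2$. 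The paper's next equality has an offsetting sign error, so its final formula is still right, while your derivation reaches it directly.
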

	
	\begin{proof}
		Let $l_j(x)$ be the Lagrange basis associated with the nodes $x_i=i/k$. For any polynomial $f$ of degree at most $k$,
		\[
		f(x)=\sum_{j=0}^{k} f(x_j)l_j(x).
		\]
		Differentiating and evaluating at $x=x_n$ gives
		\[
		f'(x_n)=\sum_{j=0}^{k}f(x_j)l_j'(x_n),
		\]
		so $b_{n,j}=l_j'(x_n)$.
		
		Let $u=kx$ and define $\ell_j(u):=l_j(u/k)$. Then $l_j(x)=\ell_j(kx)$ and hence
		\[
		l_j'(x)=k\ell_j'(kx).
		\]
		Thus
		\[
		l_j'(x_n)=k\ell_j'(n).
		\]
		For integer nodes $u_i=i$,
		\[
		\ell_j(u)=\prod_{\substack{m=0\\m\ne j}}^{k}\frac{u-m}{j-m}.
		\]
		If $j=n$, then
		\[
		\ell_n'(n)=
		\sum_{\substack{m=0\\m\ne n}}^{k}\frac{1}{n-m}
		=
		H_n-H_{k-n}.
		\]
		If $j\ne n$, then
		\[
		\ell_j'(n)
		=
		\frac{1}{n-j}
		\prod_{\substack{m=0\\m\ne n,j}}^{k}
		\frac{n-m}{j-m}
		=
		\frac{(-1)^{n-j}}{n-j}
		\frac{\binom{k}{j}}{\binom{k}{n}}.
		\]
		Multiplying by $k$ gives the stated formula.
	\end{proof}
	
	\section{Iterative solver for the 3D nonlinear system}
	\label{app:solver-3d}
	
	In this appendix, we describe the Newton-type solver used for the three-dimensional volume-preserving scheme \eqref{eq:scheme-3d}. Compared with the two-dimensional case, the main additional ingredient is the linearization of the oriented area vector. On a reference triangle with vertices mapped to $\mathbf z_1,\mathbf z_2,\mathbf z_3$, this vector is
	\[
	\mathcal J(\mathbf X)
	=
	(\mathbf z_2-\mathbf z_1)\times(\mathbf z_3-\mathbf z_1).
	\]
	For a perturbation $\boldsymbol\eta$, its first variation is
	\begin{equation}
		D\mathcal J(\mathbf X)[\boldsymbol\eta]
		=
		(\boldsymbol\eta_2-\boldsymbol\eta_1)\times(\mathbf z_3-\mathbf z_1)
		+
		(\mathbf z_2-\mathbf z_1)\times(\boldsymbol\eta_3-\boldsymbol\eta_1).
		\label{eq:DJ-3d}
	\end{equation}
	
	Let
	\[
	(\mathbf{X}^{m+1,l},H^{m+1,l})
	\]
	be the approximation at the $l$-th Newton iteration, and define the increments
	\[
	\delta\mathbf{X}^{l}
	:=
	\mathbf{X}^{m+1,l+1}-\mathbf{X}^{m+1,l},
	\qquad
	\delta H^{l}
	:=
	H^{m+1,l+1}-H^{m+1,l}.
	\]
	
	The two half-time geometries induced by the quadratic interpolation path are
	\begin{align}
		\mathbf{X}^{m+\frac12,l}
		&:=
		-\frac18\mathbf{X}^{m-1}
		+\frac34\mathbf{X}^{m}
		+\frac38\mathbf{X}^{m+1,l},
		\\
		\mathbf{X}^{m-\frac12,l}
		&:=
		\frac38\mathbf{X}^{m-1}
		+\frac34\mathbf{X}^{m}
		-\frac18\mathbf{X}^{m+1,l}.
	\end{align}
	Hence their variations satisfy
	\begin{equation}
		\delta\mathbf{X}^{m+\frac12,l}
		=
		\frac38\delta\mathbf{X}^{l},
		\qquad
		\delta\mathbf{X}^{m-\frac12,l}
		=
		-\frac18\delta\mathbf{X}^{l}.
		\label{eq:delta-half-Y}
	\end{equation}
	
	At the Newton iterate, the structure-preserving weighted normals are defined by
	\begin{align}
		\mathbf{n}^{m+1,l}
		&:=
		\frac{\mathcal{J}(\mathbf{X}^{m+1,l})}
		{|\mathcal{J}(\widetilde{\mathbf{X}}^{m+1})|},
		\\
		\mathbf{n}^{m+\frac12,l}
		&:=
		\frac{\mathcal{J}(\mathbf{X}^{m+\frac12,l})}
		{|\mathcal{J}(\widetilde{\mathbf{X}}^{m+\frac12})|},
		\\
		\mathbf{n}^{m-\frac12,l}
		&:=
		\frac{\mathcal{J}(\mathbf{X}^{m-\frac12,l})}
		{|\mathcal{J}(\widetilde{\mathbf{X}}^{m-\frac12})|}.
	\end{align}
	Using \eqref{eq:DJ-3d} and \eqref{eq:delta-half-Y}, their first variations are
	\begin{align}
		\delta\mathbf{n}^{m+1,l}
		&=
		\frac{
			D\mathcal{J}(\mathbf{X}^{m+1,l})[\delta\mathbf{X}^{l}]
		}{
			|\mathcal{J}(\widetilde{\mathbf{X}}^{m+1})|
		},
		\label{eq:dn-mplus-3d}
		\\
		\delta\mathbf{n}^{m+\frac12,l}
		&=
		\frac{
			D\mathcal{J}(\mathbf{X}^{m+\frac12,l})[\frac38\delta\mathbf{X}^{l}]
		}{
			|\mathcal{J}(\widetilde{\mathbf{X}}^{m+\frac12})|
		},
		\label{eq:dn-phalf-3d}
		\\
		\delta\mathbf{n}^{m-\frac12,l}
		&=
		\frac{
			D\mathcal{J}(\mathbf{X}^{m-\frac12,l})[-\frac18\delta\mathbf{X}^{l}]
		}{
			|\mathcal{J}(\widetilde{\mathbf{X}}^{m-\frac12})|
		}.
		\label{eq:dn-mhalf-3d}
	\end{align}
	
	For convenience, we introduce the velocity approximations appearing in the five Boole terms:
	\begin{align}
		\mathbf{D}^{m+1,l}
		&:=
		\frac{3\mathbf{X}^{m+1,l}-4\mathbf{X}^{m}+\mathbf{X}^{m-1}}{2\tau},
		&
		\delta\mathbf{D}^{m+1,l}
		&=
		\frac{3}{2\tau}\delta\mathbf{X}^{l},
		\\
		\mathbf{D}^{m+\frac12,l}
		&:=
		\frac{\mathbf{X}^{m+1,l}-\mathbf{X}^{m}}{\tau},
		&
		\delta\mathbf{D}^{m+\frac12,l}
		&=
		\frac{1}{\tau}\delta\mathbf{X}^{l},
		\\
		\mathbf{D}^{m,l}
		&:=
		\frac{\mathbf{X}^{m+1,l}-\mathbf{X}^{m-1}}{2\tau},
		&
		\delta\mathbf{D}^{m,l}
		&=
		\frac{1}{2\tau}\delta\mathbf{X}^{l},
		\\
		\mathbf{D}^{m-\frac12}
		&:=
		\frac{\mathbf{X}^{m}-\mathbf{X}^{m-1}}{\tau},
		\\
		\mathbf{D}^{m-1,l}
		&:=
		\frac{-\mathbf{X}^{m+1,l}+4\mathbf{X}^{m}-3\mathbf{X}^{m-1}}{2\tau},
		&
		\delta\mathbf{D}^{m-1,l}
		&=
		-\frac{1}{2\tau}\delta\mathbf{X}^{l}.
	\end{align}
	
	At each Newton step, we solve for
	\[
	(\delta\mathbf{X}^{l},\delta H^{l})\in[\mathcal V^h]^3\times\mathcal V^h
	\]
	such that, for all $(\varphi^h,\boldsymbol{\omega}^h)\in\mathcal V^h\times[\mathcal V^h]^3$,
	\begin{subequations}
		\label{eq:newton-3d}
		\begin{align}
			&
			\frac{7}{90}
			\Bigg[
			\left(
			\delta\mathbf{D}^{m+1,l},
			\varphi^h\mathbf{n}^{m+1,l}
			\right)^h_{\widetilde{S}^{m+1}}
			+
			\left(
			\mathbf{D}^{m+1,l},
			\varphi^h\delta\mathbf{n}^{m+1,l}
			\right)^h_{\widetilde{S}^{m+1}}
			+
			\left(
			\nabla_{\widetilde{S}^{m+1}}\delta H^{l},
			\nabla_{\widetilde{S}^{m+1}}\varphi^h
			\right)_{\widetilde{S}^{m+1}}
			\Bigg]
			\nonumber\\
			&\quad
			+
			\frac{32}{90}
			\Bigg[
			\left(
			\delta\mathbf{D}^{m+\frac12,l},
			\varphi^h\mathbf{n}^{m+\frac12,l}
			\right)^h_{\widetilde{S}^{m+\frac12}}
			+
			\left(
			\mathbf{D}^{m+\frac12,l},
			\varphi^h\delta\mathbf{n}^{m+\frac12,l}
			\right)^h_{\widetilde{S}^{m+\frac12}}
			+
			\left(
			\nabla_{\widetilde{S}^{m+\frac12}}\frac{\delta H^{l}}{2},
			\nabla_{\widetilde{S}^{m+\frac12}}\varphi^h
			\right)_{\widetilde{S}^{m+\frac12}}
			\Bigg]
			\nonumber\\
			&\quad
			+
			\frac{12}{90}
			\left(
			\delta\mathbf{D}^{m,l},
			\varphi^h\mathbf{n}^{m}
			\right)^h_{S^{m}}
			+
			\frac{32}{90}
			\left(
			\mathbf{D}^{m-\frac12},
			\varphi^h\delta\mathbf{n}^{m-\frac12,l}
			\right)^h_{\widetilde{S}^{m-\frac12}}
			+
			\frac{7}{90}
			\left(
			\delta\mathbf{D}^{m-1,l},
			\varphi^h\mathbf{n}^{m-1}
			\right)^h_{S^{m-1}}
			\nonumber\\
			&\qquad
			=
			-\mathcal{R}_1^{l}(\varphi^h),
			\label{eq:newton-3d-a}
			\\
			&
			\left(
			\delta H^{l},
			\widetilde{\mathbf{n}}^{m+1}\cdot\boldsymbol{\omega}^h
			\right)^h_{\widetilde{S}^{m+1}}
			-
			\left(
			\nabla_{\widetilde{S}^{m+1}}\delta\mathbf{X}^{l},
			\nabla_{\widetilde{S}^{m+1}}\boldsymbol{\omega}^h
			\right)_{\widetilde{S}^{m+1}}
			=
			-\mathcal{R}_2^{l}(\boldsymbol{\omega}^h),
			\label{eq:newton-3d-b}
		\end{align}
	\end{subequations}
	where $\mathcal{R}_1^l$ and $\mathcal{R}_2^l$ denote the residuals of \eqref{eq:scheme-3d-a} and \eqref{eq:scheme-3d-b}, evaluated at $(\mathbf{X}^{m+1,l},H^{m+1,l})$.
	
	After solving \eqref{eq:newton-3d}, we update
	\[
	\mathbf{X}^{m+1,l+1}
	=
	\mathbf{X}^{m+1,l}+\delta\mathbf{X}^{l},
	\qquad
	H^{m+1,l+1}
	=
	H^{m+1,l}+\delta H^{l}.
	\]
	The iteration is terminated once
	\[
	\max\left\{
	\|\delta\mathbf{X}^{l}\|_{\ell^\infty},
	\|\delta H^{l}\|_{\ell^\infty}
	\right\}
	\le \mathrm{tol}.
	\]
	
	In the computations, the initial guess is taken as
	\[
	\mathbf{X}^{m+1,0}=\widetilde{\mathbf{X}}^{m+1},
	\qquad
	H^{m+1,0}=H^m,
	\]
	where the predicted surfaces $\widetilde{S}^{m+1}$ and $\widetilde{S}^{m+\frac12}$ are obtained by BGN1 predictor steps with time steps $\tau$ and $\tau/2$, respectively.
	
	\begin{remark}
		The exact volume conservation of the fully discrete scheme is recovered up to the nonlinear solver tolerance. The principal additional feature in the three-dimensional Newton linearization is the consistent treatment of the weighted normals at the new and half-time geometries through the first variation of the oriented area vector \eqref{eq:DJ-3d}.
	\end{remark}

	\section*{Acknowledgment}
	This work was partially supported by Shenzhen Loop Area Institute (Contract No. SLAI2026020007).

	\section*{Data Availability}
	The datasets generated during and/or analyzed during the current study are available from
	the authors on reasonable request.
	\section*{Declarations}
	\section*{Conflict of interest}
	The authors declare that they have no conflict of interest.
	\bibliographystyle{spmpsci}
	\bibliography{references}
	
	\end{document}